 \newtheorem{theorem}{Theorem}[section]
 \newtheorem{corollary}[theorem]{Corollary}
 \newtheorem{proposition}[theorem]{Proposition}
 \newtheorem{lemma}[theorem]{Lemma}
\theoremstyle{definition}
\theoremstyle{definition}
  \newtheorem{definition}[theorem]{Definition}}
\theoremstyle{definition}
 \newtheorem{example}[theorem]{Example}}
\theoremstyle{definition}
\theoremstyle{definition}
\theoremstyle{definition}
 \newcommand{\LO}{\textrm{LO}}
 \newcommand{\Z}{\mathbb{Z}}
 \newcommand{\mG}{\mathcal{G}}
 \newcommand{\mF}{\mathcal{F}}
 \newcommand{\mX}{\mathcal{X}}
 \newcommand{\mS}{\mathcal{S}}
  \newcommand{\mH}{\mathcal{H} }
\title[Construction of isolated left orderings]{Construction of isolated left orderings via partially central cyclic amalgamation}
\author{Tetsuya Ito}
\address{Department of Mathematics,
The University of British Columbia, 1984 Mathematics Road
Vancouver, B.C, Canada V6T 1Z2}
\email{tetitoh@math.ubc.ca}
\subjclass[2010]{Primary~20F60 
, Secondary~06F15}
\keywords{Orderable groups, isolated ordering, space of left orderings}
\begin{document}
\maketitle

\begin{abstract}
We give a new method to construct isolated left orderings of groups whose positive cones are finitely generated. Our construction uses an amalgamated free product of two groups having an isolated ordering. We construct a lot of new examples of isolated orderings, and give an example of isolated left orderings having various properties which previously known isolated orderings do not have.  
\end{abstract}

\section{Introduction}

A total ordering $<_{G}$ on a group $G$ is a {\em left ordering} if $g <_{G} g'$ implies $hg<_{G} hg'$ for all $g,g',h \in G$. 
The {\em positive cone} of a left ordering $<_{G}$ is a sub-semigroup $P(<_{G})$ of $G$ consisting of $<_{G}$-positive elements.

The set of all left orderings of $G$ is denoted by $\LO(G)$. For $g \in G$, let $U_{g}$ be a subset of $\LO(G)$ defined by  
\[ U_{g}= \{ <_{G} \,\in \LO(G) \: | \: 1<_{G} g\}. \]
We equip a topology on $\LO(G)$ so that $\{U_{g}\}_{g \in G}$ is an open sub-basis of the topology.
This topology is understood as follows. For a left ordering $<_{G}$ of $G$, $G$ is decomposed as a disjoint union $G= P(<_{G}) \sqcup \{1\} \sqcup P(<_{G})^{-1}$ using the positive cone $P(<_{G})$. Conversely, a sub-semigroup $P$ of $G$ having this property is a positive cone of a left ordering of $G$: An ordering $<_{P}$ defined by  $g<_{P} g'$ if $g^{-1}g' \in P$ is a left-ordering whose positive cone is $P$. Thus $\LO(G)$ is identified with a subset of the powerset $2^{G -\{1\}}$. The topology of $\LO(G)$ defined as above coincides with the relative topology as the subspace of $2^{G -\{1\}}$, equipped with the powerset topology.

In this paper, we always consider {\em countable} groups, so we simply refer a countable group as a group unless otherwise specified. 
Then it is known that $\LO(G)$ is a compact, metrizable, and totally disconnected \cite{s}. Moreover, $\LO(G)$ is either uncountable or finite \cite{l}. 
Thus as a topological space, $\LO(G)$ is rather similar to the Cantor set: The main difference is that the space $\LO(G)$ might be non-perfect, that is, $\LO(G)$ might have isolated points. Indeed, if $\LO(G)$ has no isolated points and is not a finite set, then $\LO(G)$ is homeomorphic to the Cantor set. We call a left ordering which is an isolated point of $\LO(G)$ an {\em isolated ordering}.

 It is known that a left ordering $<_{G}$ whose positive cone is a finitely generated semigroup is isolated. In this paper we will concentrate our attention to study such an isolated ordering. We say a finite set of non-trivial elements of $G$, $\mG=\{g_{1},\ldots,g_{r}\}$ {\em defines} an isolated left ordering $<_{G}$ of $G$ if the positive cone of $<_{G}$ is generated by $\mG$ as a semigroup. For an isolated left ordering $<_{G}$ of a group $G$, the {\em rank} of $<_{G}$ is the minimal number of generating sets of the positive cone of $<_{G}$ and denoted by $r(<_{G})$. (If $P(<_{G})$ is not finitely generated semigroup we define $r(<_{G}) = \infty$).

We say an isolated ordering $<_{G}$ of $G$ is {\em genuine} if $\LO(G)$ is not a finite set. Then $\LO(G)$ contains (uncountably many) non-isolated points. 
The classification of groups having non-genuine isolated orderings, namely, the classification of groups having finitely many left-orderings is given by Tararin (see \cite{km}). 
On the other hand, it is difficult to construct genuine isolated left orderings, and few examples are known. At this time of writing, as long as author's knowledge, there are essentially only two families of genuine isolated left orderings.\\

\begin{enumerate}
\item {\em Dubrovina-Dubrovin ordering} \cite{ddrw},\cite{dd}.\\

Let $\sigma_{1},\ldots,\sigma_{n-1}$ be the standard generator of the $n$-strand braid group $B_{n}$. 
The {\em Dubrovina-Dubrovin ordering} $<_{DD}$ is an isolated left ordering of $B_{n}$ whose positive cone is generated by $\{a_{1},\ldots,a_{n-1}\}$, where $a_{i}$ is given by 
\[ a_{i}= (\sigma_{n-i}\sigma_{n-i+1} \cdots \sigma_{n-1})^{(-1)^{i}}\]
The rank of the Dubrovina-Dubrovin ordering $<_{DD}$ is $n-1$. See \cite{ddrw}, \cite{dd} for details.\\

\item {\em Isolated orderings of} $\Z*_{\Z}\Z$ \cite{i1},\cite{n2}.\\

Let $G = \Z*_{\Z} \Z$ be the group obtained as an amalgamated free product of two infinite cyclic groups over $\Z$. Thus, $G$ is presented as 
\[ G = \langle x,y \: | \: x^{m}= y^{n} \rangle. \]
by using some integers $m$ and $n$. Then the generating set $\{xy^{1-n},y\}$ defines an isolated left ordering $<_{A}$ of $G$, which is genuine if  $(m,n) \neq (2,2)$. The rank of $<_{A}$ is $2$.
This example was found by Navas \cite{n2} for the case $m=2$, and by the author \cite{i1} for general cases.
We here remark that if $(m,n)=(2,3)$ then $G_{m,n}$ is the 3-braid group $B_{3}$, and the isolated ordering $<_{A}$ is the same as the Dubrovina-Dubrovin ordering $<_{DD}$. \\

\end{enumerate}

Thus, it is desirable to find more examples or general constructions of isolated left orderings. 

In author's previous paper \cite{i1}, we gave one general method to construct isolated orderings by using rather combinatorial approach.
Following \cite{n2}, we introduced a notion of {\em Dehornoy-like ordering}. This is a left-ordering whose positive cone consists of certain kind of words over a special generating set $\mS$ of $G$, which we called {\em $\sigma(\mS)$-positive words}. A Dehornoy-like ordering is a generalization of the Dehornoy ordering of the braid groups, one of the most interesting left orderings: The Dehornoy ordering has various stimulating features and a lot of interesting interpretations that relate many aspects of braid groups and orderings. See \cite{ddrw} for the theory of the Dehornoy ordering. One fascinating property of the Dehornoy ordering is that one get the Dubrovina-Dubrovin ordering by modifying the Dehornoy ordering.

We showed that, under some condition which we called the Property $F$, Dehornoy-like orderings and the Dehornoy ordering share various properties. In particular, we have shown that a Dehornoy-like ordering produces an isolated ordering and vice versa. Indeed, it is shown that the above two families of known isolated orderings are derived from Dehornoy-like orderings.

However it seems to be more difficult to find an example of a Dehornoy-like ordering than to find an example of an isolated ordering directly, since the definition of Dehornoy-like orderings includes complicated combinatorics.

The aim of this paper is to give a new construction of isolated left orderings by means of the {\em partially central cyclic amalgamation}. 
From two groups having (not necessarily genuine) isolated orderings, we construct a new group having an isolated left ordering by using amalgamated free product over $\Z$.

 In almost all cases, the obtained isolated orderings are genuine. Our construction can be seen as an extension of (2) of known examples, but it is completely different from the Dehornoy-like orderings construction. In fact, we will see that some of the so far orderings cannot be obtained from Dehornoy-like orderings.

The following is the summary of main result of this paper.
Recall that for $g \in G$ and a left ordering $<_{G}$ of $G$, $<_{G}$ called a {\em $g$-right invariant ordering} if the ordering $<_{G}$ is preserved by the right multiplication of $g$, that is, $a<_{G} b$ implies $ag <_{G} bg$ for all $a,b \in G$.

\begin{theorem}[Construction of isolated left ordering via partially central cyclic amalgamation]
\label{thm:main}
Let $G$ and $H$ be finitely generated groups. 
Let $z_{G}$ be a non-trivial central element of $G$, and $z_{H}$ be a non-trivial element of $H$.

Let $\mG =\{g_{1},\ldots,g_{m}\}$ be a finite generating set of $G$ which defines an isolated left ordering $<_{G}$ of $G$. We take a numbering of elements of $\mG$ so that $1<_{G} g_{1} <_{G} \cdots <_{G} g_{m}$ holds. 
Similarly, let $\mH=\{h_{1},\ldots,h_{n}\}$ be a finite generating set of $H$ which defines an isolated left ordering $<_{H}$ of $H$ such that the inequalities $1<_{H} h_{1} <_{H} \cdots <_{H} h_{n}$ hold.

We assume the cofinality assumptions {\bf [CF(G)]}, {\bf [CF(H)]}, and the invariance assumption {\bf [INV(H)]}.
\begin{align*}
\bf{[CF(G)]} & \quad g_{i} <_{G} z_{G} \text{ holds for all } i.\\
\bf{[CF(H)]} & \quad h_{i} <_{H} z_{H} \text{ holds for all } i.\\
\bf{[INV(H)]} & \quad <_{H} \text{ is a } z_{H}\text{-right invariant ordering}.
\end{align*}

Let $X= G * _{\Z} H = G *_{\langle z_{G}=z_{H} \rangle} H$ be an amalgamated free product of $G$ and $H$ over $\Z$. For $i=1,\ldots, m$, let $x_{i}=g_{i}z_{H}^{-1}h_{1}$. Then we have the following results:

\begin{enumerate}
\item[(i)] The generating set $\{x_{1},\ldots, x_{m}, h_{1},\ldots, h_{n}\}$ of $X$ defines an isolated left ordering $<_{X}$ of $X$.
\item[(ii)] The isolated ordering $<_{X}$ does not depend on the choice of the generating sets $\mG$ and $\mH$. Thus, $<_{X}$ only depends on the isolated orderings $<_{G},<_{H}$ and the elements $z_{G},z_{H}$.
\item[(iii)] The natural inclusions $\iota_{G} : G \rightarrow X$ and $\iota_{H}: H \rightarrow X$ are order-preserving homomorphisms.
\item[(iv)] $1<_{X} x_{1} <_{X} \cdots <_{X} x_{m} <_{X} h_{1} <_{X} \cdots <_{X} h_{n} <_{X} z_{H}=z_{G}$. Moreover, $z= z_{G}=z_{H}$ is $<_{X}$-positive cofinal and the isolated ordering $<_{X}$ is a $z$-right invariant ordering.
\item[(v)] $r(<_{X}) \leq r(<_{G}) + r(<_{H})$.
\item[(vi)] Let $Y$ be a non-trivial proper subgroup of $X$. If $Y$ is $<_{X}$-convex, then $Y= \langle x_{1} \rangle$, the infinite cyclic group generated by $x_{1}$. 
\end{enumerate}
\end{theorem}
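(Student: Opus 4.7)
The plan is to define $P\subseteq X$ to be the sub-semigroup generated by $\{x_1,\ldots,x_m,h_1,\ldots,h_n\}$, verify that $P$ is a positive cone of a left ordering $<_X$ of $X$, and extract (i)--(vi) from that. Once the positive-cone axioms $X\setminus\{1\}=P\sqcup P^{-1}$ are confirmed, isolation in (i) is automatic, since a left ordering with finitely generated positive cone is known to be isolated.

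The core technical work is the positive-cone verification, for which I would work with the amalgamated normal form. Because $z=z_G=z_H$ is central in $G$, left-coset representatives can be chosen so that every $w\in X$ has a unique expression $w=z^k u_1 u_2 \cdots u_\ell$, with the $u_i$ alternating nontrivial representatives of $G/\langle z\rangle$ and $H/\langle z\rangle$. The cofinality hypotheses \textbf{[CF(G)]} and \textbf{[CF(H)]} force $z$ to have infinite order, to be positive cofinal in both $<_G$ and $<_H$, and in particular to lie in $\langle h_1,\ldots,h_n\rangle_{\mathrm{sgp}}\subseteq P$; moreover no $g_i$ or $h_j$ is a power of $z$. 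I would then establish the trichotomy by induction on the pair $(\ell,|k|)$ ordered lexicographically, at each step either peeling off an $h_j^{\pm 1}$ from the left when the leftmost syllable lies in $H$, or rewriting the leftmost $G$-syllable via the identity $x_i=g_iz^{-1}h_1$ to peel off an $x_i^{\pm 1}$. Centrality of $z$ in $G$ lets $z$-powers pass freely through $G$-syllables, while the $z$-right invariance \textbf{[INV(H)]} is precisely what allows a peeling step across an $H$-syllable to absorb, without sign change, the $z$-powers that surface during the reduction. Disjointness $P\cap P^{-1}=\emptyset$ is then forced by the uniqueness of the reduction's outcome.

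With $<_X$ in hand, the remaining parts fall into place. For (iii), $h_j\in P$ is tautological and $g_i\in P$ follows because its normal form is a single $<_G$-positive $G$-syllable, hence positive by the reduction criterion. Statement (ii) follows from (iii) together with the observation that the normal-form positivity test depends only on the orderings $<_G,<_H$ and the element $z$, not on the generating sets $\mG,\mH$ used to define them. Part (iv) is a direct computation on normal forms: the inequalities $x_1<_X\cdots <_X x_m<_X h_1<_X\cdots <_X h_n<_X z$ are checked one at a time; positive cofinality of $z$ in $<_X$ is inherited from \textbf{[CF(G)]} and \textbf{[CF(H)]} via the normal form; and the $z$-right invariance of $<_X$ is checked on generators using centrality of $z$ in $G$ together with \textbf{[INV(H)]}. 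Part (v) is the immediate bound $m+n$. For (vi), $x_1$ is the $<_X$-minimum positive element, and a normal-form analysis shows $\langle x_1\rangle$ is convex; any strictly larger convex subgroup must, via convexity combined with multiplicative closure and \textbf{[INV(H)]}, eventually contain $z$, and hence all of $X$ by $z$-positive cofinality. The chief obstacle throughout is the trichotomy, specifically the disjointness $P\cap P^{-1}=\emptyset$: the inductive reduction requires a delicate interplay between the amalgamated normal form, the centrality of $z$ in $G$, and the $z$-right invariance of $<_H$ to guarantee a consistent outcome.
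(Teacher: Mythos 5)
Your high-level plan — work with a normal form, run a left-to-right reduction, and prove the trichotomy by induction on a complexity measure — is the same broad strategy as the paper's. But the central technical device that actually makes the induction close is missing, and I don't think your measure $(\ell,|k|)$ works as stated.

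The difficulty is precisely the interaction you wave at in the last sentence. When you rewrite a $G$-syllable $f$ with $1<_G f<_G z_G$ as a $\mathcal{G}$-positive word $g_{j_1}\cdots g_{j_r}$ and then as an $\{\mathcal{X},\mathcal{H}\}$-word via $g_{j_i}=x_{j_i}\Delta_H$, you get $x_{j_1}\Delta_H x_{j_2}\Delta_H\cdots x_{j_r}\Delta_H$, so a wall of $\Delta_H=z_Hh_1^{-1}$ factors appears between the $x$'s. If the next $H$-syllable happens to equal $\Delta_H$ (or collapses into $\langle z\rangle$ after absorbing $\Delta_H$), the block of $x$'s merges with the next $G$-syllable and the syllable count goes \emph{up}, not down — and after re-normalizing, new $z$-powers are produced so $|k|$ can also increase. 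Peeling a single $h_j^{\pm1}$ or $x_i^{\pm1}$ is likewise non-deterministic and does not visibly decrease $(\ell,|k|)$. The paper sidesteps this by introducing the \emph{reduced standard factorization}: a normal form $r p_1 q_1\cdots p_lq_l$ whose defining conditions (each $q_i<_Hz_H$, and — crucially — no \emph{reducible distinguished subfactorization}) are exactly what is needed so that the sign of $r$ alone detects the sign of $x$, and so that the Property~C induction on the complexity $l$ genuinely terminates. Establishing that every element admits such a factorization and that ``reducing operations'' terminate (Lemmas on standard factorization, pre-reduction, and the reducing operation) is where the real work lives; your sketch has no analogue of the reducibility bookkeeping.

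Your claim that ``disjointness $P\cap P^{-1}=\emptyset$ is then forced by the uniqueness of the reduction's outcome'' is not justified: your reduction procedure is non-deterministic, and confluence is exactly what would need to be proved. The paper's Property~A argument avoids confluence altogether: it takes a \emph{reduced} standard factorization of any $\{\mathcal{X},\mathcal{H}\}$-positive word, converts it back to an alternating $\{\mathcal{G},\mathcal{H}\}$-word using irreducibility to guarantee every $G$-syllable lies outside $\langle z_G\rangle$ and every interior $H$-syllable lies outside $\langle z_H\rangle$, and then invokes uniqueness of normal form in the amalgamated free product. That step, where irreducibility of distinguished subfactorizations is invoked to ensure no syllable degenerates to a $z$-power, is the heart of the proof and is absent from your proposal. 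The remaining items (ii)--(vi) are handled by you in essentially the same spirit as the paper, and those readings are fine once (i) is in hand, so the gap is localized to the trichotomy.
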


We call the construction of isolated ordering described in Theorem \ref{thm:main} the {\em partially central cyclic amalgamation construction}. 

As we will see in Lemma \ref{lem:cofinal} in Section \ref{sec:assump}, the cofinality assumption {\bf CF(G)} (resp. {\bf CF(H)}) is understood as an assumption on $z_{G}$ and $<_{G}$ (resp. $z_{H}$ and $<_{H}$). Thus, Theorem \ref{thm:main} (ii) shows that the choice of the generating sets $\mG$ and $\mH$ is not important, though it is useful to describe and understand the isolated ordering $<_{X}$. The generating sets $\mG$ and $\mH$ play rather auxiliary roles and are not essential in our partially central cyclic amalgamation construction. This makes a sharp contrast with the construction using Dehornoy-like orderings, since in the Dehornoy-like ordering construction we need to use a special generating set derived from Dehornoy-like ordering having the nice property which we called the Property F.

Theorem \ref{thm:main} (iii) shows that the partially central cyclic amalgamation construction can be seen as a mixing of two isolated orderings $<_{G}$ and $<_{H}$. We remark that Theorem \ref{thm:main} (iv) ensures that we can iterate the partially central cyclic amalgamation construction. Thus, we can actually produce many isolated orderings by using the partially central cyclic amalgamation constructions from known isolated orderings.

The proof of Theorem \ref{thm:main} (i) is constructive and will actually provide an algorithm to determine the isolated ordering $<_{X}$. In particular, the isolated ordering $<_{X}$ can be determined algorithmically if we have algorithms to compute the isolated orderings $<_{G}$ and $<_{H}$, as we will see in Section \ref{sec:comp}.

It is interesting to compare our amalgamation with other natural operations on groups.
Unlike the partial central cyclic amalgamation (the amalgamated free product over $\Z$, used in Theorem \ref{thm:main}), the usual free product does not preserve the property that the group has an isolated left ordering.
Here is the simplest counter-example: the free group of rank two $F_{2}=\Z*\Z$ has no isolated orderings \cite{n1}, whereas the infinite cyclic group $\Z$ has (non-genuine) isolated orderings, since it admits only two left orderings. Indeed, recently Rivas \cite{r} proved that free products of groups do not have any isolated left orderings. Similarly, the direct products of groups also do not preserve the property that the group has an isolated left ordering: the free abelian group of rank two $\Z \times \Z$ has no isolated orderings \cite{s}.

The plan of this paper is as follows: In Section \ref{sec:const} we prove Theorem \ref{thm:main}. The main technical tool of the proof is a {\em reduced standard factorization}, which serves as some kind of normal form of elements in $X$, adapted to the generating set $\{x_{1},\ldots,x_{m},h_{1},\ldots,h_{n}\}$. In Section \ref{sec:exam} we give some examples of isolated orderings obtained by applying Theorem \ref{thm:main}. We observe that our examples have various interesting properties, which have not been appeared for known examples.

\section{Construction of isolated left orderings}
\label{sec:const}

Let $\mS= \{s_{1},\ldots,s_{n}\}$ be a finite generating set of $G$ and let $\mS^{-1}=\{s_{1}^{-1},\ldots,s_{n}^{-1}\}$. 
We denote by $\mS^{*}$ the free semigroup generated by $\mS$. That is, $\mS^{*}$ is a set of non-empty words on $\mS$.
We say an element of $\mS^{*}$ (resp. $(\mS^{-1})^{*}$) an {\em $\mS$-positive word} (resp. an {\em $\mS$-negative word}). We will often use a symbol $P(\mS)$ (resp. $N(\mS)$) to represent an $\mS$-positive (resp.  $\mS$-negative) word.

\subsection{Cofinality and Invariance assumptions}
\label{sec:assump}

First of all we review the assumptions in the statement of Theorem \ref{thm:main} again, and deduce their direct consequences. This clarifies the role of each hypothesis in Theorem \ref{thm:main}.

Let $G$ and $H$ be finitely generated groups having an isolated left ordering $<_{G}$ and $<_{H}$ respectively.
Let $z_{G} \in G$ be a non-trivial central element of $G$, and let $z_{H}$ be a non-trivial element of $H$, which might be noncentral. We consider the group $X$ obtained as an amalgamated free product over $\Z$
\[ X = G*_{\Z} H = G*_{\langle z_{G}=z_{H} \rangle} H.\]

Let $\mG =\{g_{1},\ldots,g_{m}\}$ be a generating set of $G$ which defines an isolated left ordering $<_{G}$ of $G$. We take a numbering of elements of $\mG$ so that $1 <_{G} g_{1} <_{G} \cdots <_{G} g_{m}$ holds. 
Similarly, let $\mH=\{h_{1},\ldots,h_{n}\}$ be a generating set of $H$ which defines an isolated left ordering $<_{H}$ of $H$, and we assume that the inequalities $1<_{H} h_{1} <_{H} \cdots <_{H} h_{n}$ hold.

Recall that an element $g \in G$ is called the {\em $<_{G}$-minimal positive element} if $g$ is the $<_{G}$-minimal element in the positive cone $P(<_{G})$.  In other words, the inequality $1 <_{G} g' \leq_{G} g$ implies $g=g'$. A left ordering $<_{G}$ is called {\em discrete} if $<_{G}$ has a $<_{G}$-minimal positive element. Otherwise, $<_{G}$ is called {\em dense}.

 As the next lemma shows, the choice of the numbering of $\mG$ (resp. $\mH$) implies that $g_{1}$ (resp. $h_{1}$) is the $<_{G}$-minimal (resp. $<_{H}$-minimal) positive element. In particular, $g_{1}$ (resp. $h_{1}$) is independent of the choice of the generating set $\mG$ (resp. $\mH$).

\begin{lemma}
\label{lem:minimal}
Let $\mG=\{g_{1},\ldots, g_{m}\}$ be a generating set of a group $G$ which defines an isolated left ordering $<_{G}$ of $G$.
Assume that $g_{1}$ is the $<_{G}$-minimal element in the set $\mG$. Then $g_{1}$ is the $<_{G}$-minimal positive element. In particular, $<_{G}$ is discrete. Moreover, $<_{G}$ is a $g_{1}$-right invariant ordering.
\end{lemma}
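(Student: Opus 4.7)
The plan is to prove the three assertions in sequence, each feeding into the next.

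First, for the claim that $g_{1}$ is the $<_{G}$-minimal positive element, I would take any $g' \in P(<_{G})$ with $g' \leq_{G} g_{1}$ and use the fact that $P(<_{G})$ is generated by $\mG$ as a semigroup to write $g' = g_{i_{1}} g_{i_{2}} \cdots g_{i_{k}}$ as a positive word. If $k=1$, then $g' = g_{i_{1}} \geq_{G} g_{1}$ by the hypothesis on the numbering. If $k \geq 2$, then the tail $u = g_{i_{2}} \cdots g_{i_{k}}$ lies in $P(<_{G})$, so $u >_{G} 1$; applying left invariance to multiply by $g_{i_{1}}$ gives $g' = g_{i_{1}} u >_{G} g_{i_{1}} \geq_{G} g_{1}$. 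Either way, $g' \geq_{G} g_{1}$, forcing $g' = g_{1}$. Discreteness is then immediate by definition.

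For the $g_{1}$-right invariance, I would recast it as the statement $g_{1}^{-1} P(<_{G}) g_{1} \subseteq P(<_{G})$. Take $g \in P(<_{G})$; then $g g_{1}$ is a product of at least two generators from $\mG$ (expand $g$ as a positive word, then append $g_{1}$), so by exactly the argument above, $g g_{1} >_{G} g_{1}$, hence $g_{1}^{-1} g g_{1} >_{G} 1$ by left invariance. This gives the desired containment.

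To upgrade containment to equality (which is what actually witnesses $g_{1}$-right invariance in the usual biconditional form), I would observe that $P' := g_{1}^{-1} P(<_{G}) g_{1}$ is itself the positive cone of a left ordering: conjugation is a bijection that respects the semigroup structure and the trichotomy decomposition $G = P(<_{G}) \sqcup \{1\} \sqcup P(<_{G})^{-1}$. If $P' \subsetneq P(<_{G})$ then there would exist $g \in P(<_{G}) \setminus P'$; by trichotomy for $P'$ we would have $g^{-1} \in P' \subseteq P(<_{G})$, contradicting $g \in P(<_{G})$. Hence $P' = P(<_{G})$, which is exactly the $g_{1}$-right invariance of $<_{G}$.

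The main conceptual step is the third one: turning the trivial-looking observation "the product of a positive element with $g_{1}$ exceeds $g_{1}$" into right invariance. The concrete word-length argument from part~(1) does the heavy lifting, and the semigroup-to-cone upgrade is the little trick that converts one-sided containment into equality. No other serious obstacles are expected; finiteness of $\mG$ is not needed here beyond giving us the positive-word expression, and the isolated-ordering hypothesis is used only indirectly through the assumption that $P(<_{G})$ is finitely generated as a semigroup.
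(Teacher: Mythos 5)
Your proof is correct, and the first part (minimality of $g_1$) is essentially the paper's argument, just phrased by stripping the tail $u$ via left-invariance instead of left-multiplying by $g_1^{-1}$ as the paper does. The right-invariance part is where you diverge: the paper, once it has established minimality, \emph{forgets the positive-word structure entirely} and argues abstractly --- from $a <_G b$ one gets $1 <_G a^{-1}b <_G a^{-1}bg_1$, so $g_1 <_G a^{-1}bg_1$ by minimality, and hence $ag_1 <_G bg_1$ --- proving the more general fact that any left ordering with a minimal positive element $g_1$ is automatically $g_1$-right invariant. You instead re-run the word-level argument to get $g_1^{-1}P(<_G)g_1 \subseteq P(<_G)$, which is fine but uses more than is needed. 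Your final ``upgrade to equality'' step is superfluous: the containment $g_1^{-1}P(<_G)g_1 \subseteq P(<_G)$ is \emph{precisely} the condition that $a <_G b \Rightarrow ag_1 <_G bg_1$, and for a total order the reverse implication is automatic (if $ag_1 <_G bg_1$ but $a \geq_G b$, either $a=b$ gives equality or $b <_G a$ gives $bg_1 <_G ag_1$, a contradiction). So the abstract argument about positive cones being incomparable under inclusion, while correct, is doing no work here; you could simply stop after obtaining the containment.
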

\begin{proof}
Assume $g \in G$ satisfies the inequalities $1<_{G} g \leq_{G} g_{1}$. 

$1<_{G} g$ means that $g$ is written as a $\mG$-positive word $g=g_{i_{1}}\cdots g_{i_{l}}$. Then 
\[ g_{1}^{-1}g = (g_{1}^{-1}g_{i_{1}}) g_{i_{2}} \cdots g_{i_{l}} \leq_{G} 1\]
This inequality holds only if $i_{1}=1$ and $l=1$, that is, $g=g_{1}$.

The $g_{1}$-right invariance of the ordering $<_{G}$ now follows from the fact that $g_{1}$ is the $<_{G}$-minimal positive element: If $a <_{G}b$, then $1<_{G} a^{-1}b <_{G} a^{-1}bg_{1}$. Thus, $g_{1} <_{G} a^{-1}bg_{1}$ so $ag_{1} <_{G} bg_{1}$.
\end{proof}

To obtain an isolated ordering of $X$ from $<_{G}$ and $<_{H}$, we impose the following assumptions, which we call the {\em cofinality assumption} for $G$ and $H$, and the {\em invariance assumption}.
\begin{align*}
\bf{[CF(G)]} & \quad g_{i} <_{G} z_{G} \text{ holds for all } i.\\
\bf{[CF(H)]} & \quad h_{i} <_{H} z_{H} \text{ holds for all } i.\\
\bf{[INV(H)]} & \quad <_{H} \text{ is a } z_{H}\text{-right invariant ordering}.
\end{align*}

Here we remark that the invariance assumption for $<_{G}$ is automatically satisfied: that is, $<_{G}$ is a $z_{G}$-right invariant ordering since we have chosen $z_{G}$ so that it is a central element.

First we observe the following simple lemma. 

\begin{lemma}
\label{lem:commute}
Let $<_{H}$ be a discrete left ordering of a group $H$, and let $h_{1}$ be the $<_{H}$-minimal positive element.
If $<_{H}$ is an $h$-right invariant ordering for $h \in H$, then $h$ commutes with $h_{1}$.
\end{lemma}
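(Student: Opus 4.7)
The plan is to conjugate $h_{1}$ by $h^{\pm 1}$, use minimality of $h_{1}$, and then translate the resulting inequality back by invariance. First I would note that $h$-right invariance is equivalent to right multiplication by $h$ being a strict order isomorphism, so it also yields $h^{-1}$-right invariance: if $a<_{H}b$ then $ah^{-1}<_{H}bh^{-1}$, for otherwise right-multiplying by $h$ would give $a\geq_{H}b$.

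Next, from $1<_{H}h_{1}$ and left invariance I would right-multiply by $h^{-1}$ and then left-multiply by $h$ (using $h$-right invariance in the first step and left invariance in the second) to obtain
\[
1 <_{H} h h_{1} h^{-1}.
\]
Analogously, right-multiplying $1<_{H}h_{1}$ by $h$ and then left-multiplying by $h^{-1}$ gives
\[
1 <_{H} h^{-1} h_{1} h.
\]
Both $hh_{1}h^{-1}$ and $h^{-1}h_{1}h$ are thus $<_{H}$-positive, so by the minimality of $h_{1}$ I obtain
\[
h_{1} \leq_{H} h h_{1} h^{-1} \quad\text{and}\quad h_{1} \leq_{H} h^{-1} h_{1} h.
\]

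Finally, I translate each inequality by one of the two invariance properties: right-multiplying the first inequality by $h$ (using $h$-right invariance) yields $h_{1}h\leq_{H}hh_{1}$, while left-multiplying the second by $h$ (using left invariance) yields $hh_{1}\leq_{H}h_{1}h$. Combining,
\[
h h_{1} \leq_{H} h_{1} h \leq_{H} h h_{1},
\]
so $hh_{1}=h_{1}h$, as desired. The only subtlety, and the one step I would state carefully, is the equivalence between $h$-right invariance and $h^{-1}$-right invariance, since the whole argument relies on being able to multiply on the right by both $h$ and $h^{-1}$ without reversing inequalities; once this is in hand, the proof is a short symmetric application of the minimal-positive-element property.
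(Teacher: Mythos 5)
Your proof is correct and follows essentially the same route as the paper's: conjugate $h_1$ by $h^{\pm1}$, invoke minimality of $h_1$, and translate back by invariance to get the two opposite inequalities. The only difference is that you explicitly spell out why $h$-right invariance gives $h^{-1}$-right invariance (and hence $hh_1h^{-1}>_H 1$ and $h^{-1}h_1h>_H 1$), a step the paper uses implicitly.
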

\begin{proof}
$<_{H}$ is an $h$-right invariant ordering, so $hh_{1}h^{-1} >_{H} 1$ and $h^{-1}h_{1}h >_{H} 1 $.  $h_{1}$ is the $<_{H}$-minimal positive element, so $hh_{1}h^{-1} \geq_{H} h_{1}$ and $h^{-1}h_{1}h \geq_{H} h_{1}$. 
Thus, we get $hh_{1}\geq_{H} h_{1}h$ and $h_{1}h \geq_{H} hh_{1}$, hence $hh_{1}=h_{1}h$.
\end{proof}

By Lemma \ref{lem:minimal} and Lemma \ref{lem:commute}, the invariance assumption {\bf [INV(H)]} implies that $z_{H}$ commutes with $h_{1}$.

For a left-ordering $<_{G}$ of $G$, an element $g \in G$ is called {\em $<_{G}$-cofinal} if for all $g' \in G$, there exist integers $m$ and $M$ such that $g^{m} <_{G} g' <_{G} g^{M}$ holds.
Although the cofinality assumptions {\bf [CF(G)]} and {\bf [CF(G)]} involve the generating sets $\mG$ and $\mH$, if we assume the invariance assumption  {\bf [INV(H)]} then these assumptions should be regarded as assumptions on $z_{G}$, $z_{H}$ and the isolated orderings $<_{G}$, $<_{H}$ as the next lemma shows. 

\begin{lemma}
\label{lem:cofinal}
Assume the invariance assumption {\bf [INV(H)]} is satisfied.
A generating set $\mH$ satisfying the cofinality assumption {\bf [CF(H)]} exists if and only if $z_{H}$ is $<_{H}$-positive cofinal and $H \neq \langle z_{H} \rangle$. Here $\langle z_{H} \rangle$ represents the subgroup of $H$ generated by $z_{H}$.
Moreover, in such case we may choose a generating set $\mH$ so that the cardinal of $\mH$ is equal to the rank of the isolated ordering $<_{H}$.
\end{lemma}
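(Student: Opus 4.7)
The plan is to prove the biconditional in two directions, with the cardinality statement falling out of the explicit construction used for the reverse direction.

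For the forward direction, suppose $\mH = \{h_{1}, \ldots, h_{n}\}$ satisfies {\bf [CF(H)]}. To see that $z_{H}$ is positive cofinal, I argue by induction on $\ell$ that every $\mH$-positive word $g = h_{i_{1}} \cdots h_{i_{\ell}}$ satisfies $g <_{H} z_{H}^{\ell}$; the inductive step combines left-invariance of $<_{H}$ with the $z_{H}^{k}$-right-invariance obtained by iterating {\bf [INV(H)]}. Negative elements and the lower bound follow by inversion. For the condition $H \neq \langle z_{H}\rangle$: otherwise $H$ is infinite cyclic with $z_{H}$ its positive generator, so the $<_{H}$-minimal positive element is $z_{H}$ itself, and Lemma \ref{lem:minimal} forces $h_{1} = z_{H}$, contradicting $h_{1} <_{H} z_{H}$.

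For the reverse direction, assume $z_{H}$ is positive cofinal and $H \neq \langle z_{H}\rangle$. A preliminary step shows $h_{1} <_{H} z_{H}$ strictly: if $h_{1} = z_{H}$, then iterating cofinality together with $z_{H}$-right-invariance forces every positive element to be a positive power of $z_{H}$, giving $H = \langle z_{H}\rangle$, a contradiction. Fix a minimal generating set $\mH^{0} = \{h_{1}^{0} = h_{1}, h_{2}^{0}, \ldots, h_{n}^{0}\}$ of $P(<_{H})$ with $n = r(<_{H})$, and for each $i$ let $k_{i} \geq 0$ be the unique integer with $z_{H}^{k_{i}} \leq_{H} h_{i}^{0} <_{H} z_{H}^{k_{i}+1}$. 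In the generic case $z_{H}^{k_{i}} <_{H} h_{i}^{0}$, I set $\tilde{h}_{i} := h_{i}^{0} z_{H}^{-k_{i}}$. In the degenerate case $h_{i}^{0} = z_{H}^{k_{i}}$, comparing $h_{i}^{0}$ with the factors of any $\mH^{0}$-positive expression of $z_{H}$ (each factor is $\leq_{H} z_{H}$ by left-invariance) and using minimality of $\mH^{0}$ forces $k_{i} = 1$, so $h_{i}^{0} = z_{H}$; in this case I set $\tilde{h}_{i} := z_{H} h_{1}^{-1}$, which lies in $(1, z_{H})$ because $z_{H}$ commutes with $h_{1}$ by Lemma \ref{lem:commute}.

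The remaining work is to verify that $\mH := \{\tilde{h}_{1}, \ldots, \tilde{h}_{n}\}$ semigroup-generates $P(<_{H})$ and has cardinality exactly $n$. Generation reduces to exhibiting $z_{H}$ as an $\mH$-positive word: an $\mH^{0}$-positive expression $z_{H} = h^{0}_{j_{1}}\cdots h^{0}_{j_{\ell}}$ with $\ell \geq 2$ has every factor strictly below $z_{H}$, hence each $k_{j_{s}} = 0$ and $\tilde{h}_{j_{s}} = h^{0}_{j_{s}}$, giving $z_{H} = \tilde{h}_{j_{1}}\cdots\tilde{h}_{j_{\ell}}$; in the remaining possibility $z_{H} = h_{i}^{0}$ for a degenerate index $i$, one has $z_{H} = \tilde{h}_{i}\cdot\tilde{h}_{1}$. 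The main obstacle I anticipate is proving pairwise distinctness of the $\tilde{h}_{i}$. The plan is to show that any coincidence $\tilde{h}_{i} = \tilde{h}_{j}$ with $i \neq j$ allows one to drop a generator from $\mH$, yielding a semigroup generating set of $P(<_{H})$ of size $n-1$ and contradicting $r(<_{H}) = n$. A case analysis on type handles this: the generic-generic clash gives $h_{i}^{0} = h_{j}^{0} z_{H}^{k_{i}-k_{j}}$ and $z_{H}$ admits an $(\mH^{0}\setminus\{h_{i}^{0}\})$-expression (by the factor-size argument above), so $h_{i}^{0}$ is redundant; the mixed degenerate-generic clash uses the commutation $[z_{H}, h_{1}] = 1$ to derive $z_{H}^{k_{j}+1} = h_{j}^{0} h_{1}$, from which $h_{j}^{0}$ can be rewritten as a positive word in $\tilde{h}_{i}$ and $\tilde{h}_{1} = h_{1}$. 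This completes the construction and establishes the "moreover" clause $|\mH| = r(<_{H})$.
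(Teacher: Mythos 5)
Your argument is correct and follows essentially the same route as the paper: take a generating set of cardinality $r(<_H)$, shift each generator into the fundamental interval by multiplying by a power of $z_H$ (you multiply on the right where the paper multiplies on the left, both legitimate given {\bf [INV(H)]} and left-invariance), use the commutation of $z_H$ with $h_1$ for the generator lying in $\langle z_H\rangle$, and check that $z_H$ remains positively expressible in the new set because every factor of a length-$\geq 2$ positive expression of $z_H$ lies strictly below $z_H$. One point worth flagging: your anticipated ``main obstacle'' --- pairwise distinctness of the $\tilde h_i$ --- is automatic once you have shown that $\mH$ semigroup-generates $P(<_H)$, since a generating set with fewer than $n = r(<_H)$ elements would contradict the minimality defining the rank; the generic/degenerate case analysis you outline for that step is therefore unnecessary, and similarly you need not pin down $k_i=1$ in the degenerate case, since $h_i^0 = z_H^{k_i}$ is $\mH$-expressible as soon as $z_H$ is.
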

\begin{proof}
In the following, we assume the invariance assumption {\bf [INV(H)]}.

Assume that a generating set $\mH$ satisfies the cofinality assumption {\bf [CF(H)]}. Then by the invariance assumption {\bf [INV(H)]}, $z_{H}$ is $<_{H}$-positive cofinal and $H \neq \langle z_{H} \rangle$. 

We show the converse: if $z_{H}$ is $<_{H}$-positive cofinal and $H \neq \langle z_{H} \rangle$, then we can choose a generating set $\mH = \{h_{1},\ldots, h_{k}\}$ so that $\mH$ defines the isolated ordering $<_{H}$, and that $\mH$ satisfies {\bf [CF(H)]}. Moreover, we will show that we can choose $k$, the cardinal of $\mH$, so that $k$ is equal to $r(<_{H})$.

Let us take a generating set $\mH' =  \{h'_{1},\ldots, h'_{k} \}$ of $H$ which defines the isolated ordering $<_{H}$. By definition of rank, we may choose $\mH'$ so that $k=r(<_{H})$ holds.
With no loss of generality, we may assume that
\[ h'_{1} <_{H} \cdots <_{H} h'_{s} \leq_{H} z_{H} <_{H} h'_{s+1} <_{H} \cdots <_{H} h'_{k} .\]
Since $z_{H}$ is $<_{H}$-cofinal, for each $i$ there is a non-negative integer $N_{i}$ such that $1 <_{H} z_{H}^{-N_{i}}h'_{i} \leq z_{H}$. Let us put $h_{i}= z_{H}^{-N_{i}}h'_{i}$. By assumption, $h'_{i}=h_{i}$ if $i\leq s$.

By the hypothesis $H \neq \langle z_{H} \rangle$, we have a strict inequality $z_{H} >_{H} h'_{1}=h_{1}$.
Thus if necessary, by replacing $h_{i}$ with $h_{1}^{-1}h_{i}$, we may assume that $h_{i} \neq z_{H}$ for all $i$.

We show that $z_{H}$ is written as an $\{ h'_{1},\ldots, h'_{s}\}$-positive word. Assume that $z_{H} = V h'_{i} W$, where $i>s$ and $V,W$ are $\mH'$-positive or non-empty words. 
Then $z_{H} W^{-1}= Vh'_{i} >_{H} V z_{H}$, hence we get $1 \geq_{H} W^{-1} >_{H} z_{H}^{-1} Vz_{H}$. However, $<_{H}$ is a $z_{H}$-right invariant ordering, hence $z_{H}^{-1} V z_{H} \geq_{H} 1$. This is a contradiction.

Therefore, the generating set $\mH= \{ h_{1},\ldots, h_{k} \}$ also defines the isolated ordering $<_{H}$. By construction, $\mH$ is a generating set which satisfies the cofinality assumption {\bf [CF(H)]} with cardinal $k=r(<_{G})$.
\end{proof}

Thus, under the invariance assumption {\bf [INV(H)]}, we can always find a generating set $\mH$ which defines $<_{H}$ and satisfies the cofinality assumption {\bf [CF(H)]}, if the conditions on $<_{H}$ and $z_{H}$ in Lemma \ref{lem:cofinal} are satisfied. Moreover, if necessary we may choose $\mH$ so that the cardinal of $\mH$ is equal to the rank of $<_{H}$. 

Since for $z_{G}$ and $<_{G}$, the invariance assumption is automatically satisfied, we can always find a generating set $\mG$ which defines $<_{G}$ and satisfies the cofinality assumption {\bf [CF(G)]} if $z_{G}$ is $<_{G}$-positive cofinal and $G \neq \langle z_{G} \rangle$.

Now we put $\Delta_{H} = z_{H}h_{1}^{-1}$. Since $z_{H}$ and $h_{1}$ do not depend on the choice of the generating set $\mH$, the same holds for $\Delta_{H}$. As an element of $H$, $\Delta_{H}$ is characterized by the following property.

\begin{lemma}
\label{lem:DeltaH}
$\Delta_{H}$ is the $<_{H}$-maximal element which is strictly smaller than $z_{H}$.
\end{lemma}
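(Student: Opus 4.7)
The plan is to verify the two defining properties of the claimed maximum separately: that $\Delta_H <_H z_H$, and that nothing strictly between $\Delta_H$ and $z_H$ can exist. Both will rest on the two consequences of Lemma \ref{lem:minimal}, namely that $h_{1}$ is the $<_{H}$-minimal positive element and that $<_{H}$ is $h_{1}$-right invariant (and hence $h_{1}^{-1}$-right invariant as well).

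First I would observe that $\Delta_{H} = z_{H}h_{1}^{-1}$ is strictly less than $z_{H}$: by left-invariance, $z_{H}h_{1}^{-1} <_{H} z_{H}$ is equivalent to $h_{1}^{-1} <_{H} 1$, which holds because $1 <_{H} h_{1}$. So $\Delta_{H}$ is at least a candidate.

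Next, for maximality I would argue by contradiction. Suppose some $w \in H$ satisfies $\Delta_{H} <_{H} w <_{H} z_{H}$. Applying the $h_{1}$-right invariance to $z_{H}h_{1}^{-1} <_{H} w$ gives $z_{H} <_{H} wh_{1}$, and then left-invariance yields $1 <_{H} z_{H}^{-1}wh_{1}$. Since $h_{1}$ is the $<_{H}$-minimal positive element, this forces $z_{H}^{-1}wh_{1} \geq_{H} h_{1}$. Applying $h_{1}^{-1}$-right invariance cancels the trailing $h_{1}$ and gives $z_{H}^{-1}w \geq_{H} 1$, i.e.\ $w \geq_{H} z_{H}$, contradicting $w <_{H} z_{H}$. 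Thus no element lies strictly between $\Delta_{H}$ and $z_{H}$, so $\Delta_{H}$ is the required maximum.

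The argument is short and mechanical; the only real care is in bookkeeping which invariance is being used at each step. Left-invariance is automatic for any left ordering, but the step collapsing $z_{H}^{-1}wh_{1} \geq_{H} h_{1}$ to $z_{H}^{-1}w \geq_{H} 1$ genuinely relies on right invariance under $h_{1}$, which is exactly what Lemma \ref{lem:minimal} supplies from the discreteness of $<_{H}$ and the minimality of $h_{1}$. I do not anticipate any serious obstacle beyond invoking these facts in the right order; in particular, the invariance assumption {\bf [INV(H)]} is not needed here — only the automatic consequences of $<_{H}$ being defined by the generating set $\mH$ with $h_{1}$ its minimum.
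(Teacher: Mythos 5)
Your proof is correct and uses the same ingredients as the paper's: left-invariance, the $<_{H}$-minimality of $h_{1}$, and the $h_{1}$-right invariance supplied by Lemma \ref{lem:minimal}. The paper phrases it slightly more compactly by first recording the dual fact that $h_{1}^{-1}$ is the $<_{H}$-maximal element below $1$ and then sandwiching $z_{H}^{-1}h$ in $[h_{1}^{-1},1)$, but unwinding that step gives exactly your chain of manipulations, so the two arguments are essentially identical.
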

\begin{proof}
Assume that $z_{H} h_{1}^{-1} = \Delta_{H} \leq_{H} h  <_{H} z_{H}$ holds for some $h \in H$. Then 
$h_{1}^{-1} \leq_{H} z_{H}^{-1}h <_{H} 1 $. By Lemma \ref{lem:minimal}, $h_{1}^{-1}$ is the $<_{H}$-maximal element which is strictly smaller than $1$, so $z_{H}^{-1}h = h_{1}^{-1}$.  Hence $h=z_{H}h_{1}^{-1}$. 
\end{proof}

Finally, we put $x_{i} = g_{i} \Delta_{H}^{-1} = g_{i}z_{H}^{-1}h_{1}$ and let $\mX = \{x_{1},\ldots, x_{m}\}.$ Then $\{\mX ,\mH\}$ generates the group $X$. The following lemma is rather obvious, but plays an important role in the proof of Theorem \ref{thm:main}.

\begin{lemma}
\label{lem:com2}
$z_{H}=z_{G}$ commutes with all $x_{i}$.
\end{lemma}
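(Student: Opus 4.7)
The plan is to check the two key commutation relations in the factors, then combine them in the amalgamated product. First, since $z_G$ is by hypothesis a central element of $G$, it commutes with every $g_i \in G$. Second, the invariance assumption \textbf{[INV(H)]} together with Lemma \ref{lem:minimal} tells us that $h_1$ is the $<_H$-minimal positive element and $<_H$ is $z_H$-right invariant, so Lemma \ref{lem:commute} applies and yields $z_H h_1 = h_1 z_H$ in $H$.

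Under the amalgamation we identify $z = z_G = z_H$ in $X$, so $z$ simultaneously commutes with each $g_i$ and with $h_1$. The computation is then one line:
\[
z\, x_i \;=\; z\, g_i\, z^{-1} h_1 \;=\; g_i\, z\, z^{-1} h_1 \;=\; g_i\, z^{-1} z\, h_1 \;=\; g_i\, z^{-1} h_1\, z \;=\; x_i\, z,
\]
where the second equality uses centrality of $z_G$ in $G$ and the fourth uses the commutation of $z_H$ with $h_1$ in $H$. This finishes the proof; there is no real obstacle here, the lemma is essentially bookkeeping of the two partial centrality statements that justify the name \emph{partially central cyclic amalgamation}.
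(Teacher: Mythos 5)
Your proof is correct and matches the paper's argument in substance: both rely on the centrality of $z_G$ in $G$ and on Lemma \ref{lem:commute} (via Lemma \ref{lem:minimal} and {\bf [INV(H)]}) to get $z_H h_1 = h_1 z_H$, then combine these in $X$. The only cosmetic difference is that the paper phrases the second commutation as ``$z_H$ commutes with $\Delta_H$'' and writes $x_i = g_i\Delta_H^{-1}$, whereas you spell out the one-line computation with $x_i = g_i z^{-1} h_1$ directly.
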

\begin{proof}
By Lemma \ref{lem:commute}, $z_{H}$ commutes with $\Delta_{H}=z_{H}h_{1}^{-1}$. Since $z_{H}=z_{G}$ commutes with all $g_{i}$, we conclude that $z_{H}$ commutes with all $x_{i} = g_{i}\Delta_{H}^{-1}$.
\end{proof}

\subsection{Property A and Property C criteria}

To prove that $\{\mX ,\mH\}$ defines an isolated left ordering $<_{X}$ of $X$, we use the following criterion which was used in the theory of the Dehornoy ordering of the braid groups \cite{ddrw} and Dehornoy-like orderings \cite{i1,n2}. Here we give the most general form of this kind of arguments.

\begin{definition}
Let $\mS=\{s_{1},\ldots,s_{m}\}$ be a generating set of a group $G$ and let 
 $W$ be a sub-semigroup of $(\mS \cup \mS^{-1})^{*}$.
\begin{enumerate}
\item We say $W$ has the {\em Property A (Acyclic Property)} if no word in $W$ represent the trivial element of $G$.
\item We say $W$ has the {\em Property C (Comparison Property)} if for each non-trivial element $g \in G$, either $g$ or $g^{-1}$ is represented by a word $w \in W$.
\end{enumerate}
\end{definition}

\begin{proposition}
\label{prop:defn}
Let $W$ be a sub-semigroup of $(\mS \cup \mS^{-1})^{*}$. 
Let $P= \pi(W)$, where $\pi: (\mS \cup \mS^{-1})^{*} \rightarrow G$ is the natural projection. 
Then $P$ is equal to a positive cone of a left ordering of $G$ if and only if $W$ has Properties A and C.
\end{proposition}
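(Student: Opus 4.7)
The plan is to unpack what it means for $P$ to be a positive cone of a left ordering and match those conditions to Properties A and C. Recall that $P \subseteq G$ is a positive cone of some left ordering if and only if $P$ is a sub-semigroup of $G$ satisfying the trichotomy $G = P \sqcup \{1\} \sqcup P^{-1}$. Since $\pi : (\mS \cup \mS^{-1})^{*} \to G$ is a semigroup homomorphism and $W$ is a sub-semigroup of $(\mS \cup \mS^{-1})^{*}$, the image $P = \pi(W)$ is automatically a sub-semigroup of $G$. So the whole content of the proposition is the trichotomy condition.

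For the forward direction, suppose $P$ is a positive cone. Then $1 \notin P$ by trichotomy, which is exactly Property A, and for every $g \neq 1$ trichotomy gives either $g \in P$ or $g^{-1} \in P$, which is exactly Property C. This direction is immediate.

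For the backward direction, assume $W$ satisfies Properties A and C. Property A says $1 \notin P$. Property C says $G \setminus \{1\} \subseteq P \cup P^{-1}$. The only remaining point is the disjointness $P \cap P^{-1} = \emptyset$: if some $g$ lay in both, then $g \in P$ and $g^{-1} \in P$, so because $P$ is a sub-semigroup we would get $1 = g \cdot g^{-1} \in P$, contradicting Property A. Combining these gives $G = P \sqcup \{1\} \sqcup P^{-1}$, so $P$ is a positive cone as required.

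There is no real obstacle here; the statement is essentially a direct translation of the definition of a positive cone into properties of the semigroup $W$ of representatives. The only mildly non-trivial observation is that $P \cap P^{-1} = \emptyset$ is not an independent hypothesis but follows from Property A together with the fact that $P$ is a sub-semigroup of $G$.
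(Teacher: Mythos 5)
Your proof is correct and follows essentially the same route as the paper: automatic semigroup closure of $P = \pi(W)$, Property A giving $1 \notin P$, Property C giving $G \setminus \{1\} \subseteq P \cup P^{-1}$, and then the trichotomy. The one place you are slightly more careful than the paper is in spelling out why $P \cap P^{-1} = \emptyset$ (the semigroup property plus $1 \notin P$); the paper compresses this into a single ``hence,'' so your explicit argument is a welcome clarification rather than a departure.
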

\begin{proof}
If $W$ is a positive cone of a left ordering, then it is obvious that $W$ has Properties A and C. 
We show the converse. Since $W$ is a sub-semigroup, $P$ is a sub-semigroup of $G$. By Property C, $G = P \cup \{1\} \cup P^{-1}$. 
Property A implies that $1 \not \in P$, hence $G$ is decomposed as a disjoint union $G=P \sqcup\{1\} \sqcup P^{-1}$. This shows that $P$ is a positive cone of a left ordering.
\end{proof}

\begin{definition}
The set of words $W$ in Proposition \ref{prop:defn} is called the language defining the corresponding left-ordering. 
\end{definition}

It is an interesting problem to ask whether or not one can choose a language defining an arbitrary left-ordering $<_{G}$ so that it is a regular language over finite alphabets: This is related to order-decision problems which we will consider in Section \ref{sec:comp}, but in this paper we will not treat this problem.

As a special case, we get a criterion for a finite generating set to define an isolated ordering, which will be used to show $\{\mX ,\mH\}$ indeed defines an isolated ordering.

\begin{corollary}
\label{cor:iso}
A finite generating set $\mG=\{g_{1},\ldots,g_{m}\}$ of a group $G$ defines an isolated ordering of $G$ if and only if the following conditions {\bf [Property A]} and {\bf [Property C]} hold:
\begin{description}
\item[Property A] If $g \in G$ is represented by a $\mG$-positive word, then $g \neq 1$.
\item[Property C] If $g \neq 1$, then $g$ is represented by either a $\mG$-positive or a $\mG$-negative word.
\end{description}
\end{corollary}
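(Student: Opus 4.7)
The plan is to obtain Corollary \ref{cor:iso} as a direct specialization of Proposition \ref{prop:defn}, taking the sub-semigroup $W \subseteq (\mG \cup \mG^{-1})^{*}$ to be the free semigroup $\mG^{*}$ generated by $\mG$ alone (that is, excluding the inverse letters $g_{i}^{-1}$). Under this choice, the elements of $W$ are by definition precisely the $\mG$-positive words, so that $\pi(W)$ coincides with the sub-semigroup of $G$ semigroup-generated by $\mG$.

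With this identification, I would check that the two general conditions of Proposition \ref{prop:defn} translate exactly into the two bulleted conditions in the statement. Property A for this particular $W$ reads: no word in $\mG^{*}$ represents the identity of $G$, which is verbatim \textbf{[Property A]} of the corollary. Property C for this $W$ reads: for each non-trivial $g \in G$, either $g$ or $g^{-1}$ lies in $\pi(\mG^{*})$; but $g^{-1}$ is represented by a $\mG$-positive word if and only if $g$ is represented by a $\mG$-negative word, so this is \textbf{[Property C]} of the corollary. Proposition \ref{prop:defn} then gives that Properties A and C hold simultaneously if and only if $P := \pi(\mG^{*})$ is the positive cone of some left ordering $<_{G}$ of $G$.

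To close the loop I would invoke the definition recalled in the introduction: $\mG$ \emph{defines} an isolated left ordering $<_{G}$ precisely when the positive cone of $<_{G}$ is generated as a semigroup by $\mG$, that is, equals $\pi(\mG^{*})$. For the $(\Leftarrow)$ direction, Properties A and C produce a left ordering whose positive cone is finitely generated (by $\mG$), which is automatically isolated by the fact cited in the introduction that any left ordering with finitely generated positive cone is isolated in $\LO(G)$. The $(\Rightarrow)$ direction is immediate: if $\mG$ defines an isolated ordering, then $\pi(\mG^{*})$ is a positive cone, so the easy half of Proposition \ref{prop:defn} forces Properties A and C.

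There is no real obstacle here; the content is entirely a bookkeeping translation between the general language of Proposition \ref{prop:defn} and the specific setup in which the sub-semigroup $W$ uses no inverse letters. The only point worth stating explicitly is the equivalence between $g^{-1} \in \pi(\mG^{*})$ and $g$ being represented by a $\mG$-negative word, so that \textbf{[Property C]} matches Property C of the proposition on the nose.
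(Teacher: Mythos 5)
Your argument is exactly the specialization the paper intends: the corollary is stated without proof as an immediate consequence of Proposition \ref{prop:defn}, and taking $W=\mG^{*}$ (no inverse letters), translating $g^{-1}\in\pi(\mG^{*})$ into ``$g$ is a $\mG$-negative word,'' and invoking the fact from the introduction that a finitely generated positive cone is isolated is precisely the intended bookkeeping. The proposal is correct and matches the paper's approach.
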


\subsection{Reduced standard factorization}
\label{sec:rsf}

Now we start to show that $\{\mX ,\mH\}$ indeed defines an isolated left ordering of $X$. From now on, we take $G,H,X,<_{G},<_{H},\mG,\mH,\mX$ as in assumptions in Theorem \ref{thm:main}, and we always assume the cofinality assumptions {\bf [CF(G)]}, {\bf [CF(H)]}, and the invariance assumption {\bf [INV(H)]}.

As the first step of the proof, we introduce a notion of reduced standard factorization, which serves as a certain kind of normal form of $X$ adapted to the generating set $\{\mX,\mH\}$.

Let $PX$ be the sub-semigroup of $X$ generated by $\mX=\{x_{1},\ldots,x_{m}\}$. 
A {\em standard factorization} of $x \in X$ is a factorization of $x \in X$ of the form
\[ \mF(x)= rp_{1}q_{1}\cdots p_{l}q_{l}\] 
where $r,q_{1},\ldots,q_{l} \in H$, $p_{1},\ldots,p_{l} \in PX$ satisfy the conditions
\begin{enumerate}
\item $q_{i} >_{H} 1$ $(i \neq l)$, and $q_{l} \geq_{H} 1$, and
\item $q_{i} \neq z_{H}^{N} $ for all $N>0$.
\end{enumerate}  

First we observe that every $x \in X$ admits a standard factorization.
Recall that $x_{i}^{-1} = \Delta_{H}g_{i}^{-1} =\Delta_{H}z_{H}^{-1}(z_{G}g_{i}^{-1})$. Since $1<_{G} (z_{G}g_{i}^{-1})$, $(z_{G}g_{i}^{-1})$ is written as a $\mG$-positive word. By rewriting such a $\mG$-positive word as an $\{\mX,\mH\}$-positive word, we may write $x_{i}^{-1}$ as
\[ x_{i}^{-1} = z_{H}^{-1}\Delta_{H}P_{i}(\mX,\mH) \]
where $P_{i}(\mX,\mH)$ denotes a certain $\{\mX,\mH\}$-positive word.
Thus, we can write $x$ without using $\{x_{1}^{-1},\ldots,x_{m}^{-1}\}$.

Moreover, $z_{H}$ is $<_{H}$-cofinal so for $h \in H$ there exists $N \in \Z$ such that $z_{H}^{N}h >_{H} 1$.
Recall that the isolated ordering $<_{H}$ is a $z_{H}$-right invariant ordering, and that $z_{H}$ commutes with all $\mX$ (Lemma \ref{lem:com2}). Thus, the above consideration shows that we are able to get a standard factorization
\[ \mF(x) = rp_{1}q_{1}\cdots p_{l}q_{l}.\] 

The {\em complexity} of a standard factorization $\mF(x)= rp_{1}q_{1}\cdots p_{l}q_{l}$ is defined to be $l$, and denoted by $c(\mF)$.

A {\em distinguished subfactorization} of a standard factorization $\mF(x)$ is, roughly saying, a part of the standard factorization $\mF(x)$ which can be regarded as a $\mG$-positive word, defined as follows.

We say a subfactorization 
\begin{equation}
\label{eqn:dsubfac}
 w= (q_{i} p_{i+1} q_{i+1} \cdots p_{i+r}q_{i+r})
\end{equation}
 in a standard factorization $\mF(x)$ is a {\em distinguished subfactorization}
if it satisfies the following two conditions: 
\begin{enumerate}
\item $q_{j} = \Delta_{H}$ for all $j = i,i+1,\ldots,i+r$.
\item $p_{j} \in \mX$ for all $j = i+1,\ldots,i+r$.
\end{enumerate}
That is, a distinguished subfactorization is a part of standard factorization which is written as 
\begin{equation}
\label{eqn:dist}
w = \Delta_{H} x_{j_{i+1}} \Delta_{H} x_{j_{i+2}} \cdots x_{j_{i+r}} \Delta_{H}
\end{equation}

We will express the distinguished subfactorization $w$ (\ref{eqn:dist}) by using a $\mG$-positive word $g_{w}$ as follows:
Let us take $x_{a} \in \mX$ so that $p'_{i} = p_{i} x_{a}^{-1} \in PX \cup \{1\}$ (such a choice of $x_{a}$ might be not unique), 
 and write a standard factorization $\mF(x)$ as
\begin{eqnarray*}
\mF(x) & =& rp_{1}q_{1}\cdots p_{l}q_{l}\\
 & = & rp_{1}q_{1} \cdots p_{i-1}q_{i-1}(p_{i}x_{a}^{-1})(x_{a}q_{i} \cdots p_{i+r}q_{i+r}) p_{i+r+1}q_{i+r+1}\cdots p_{l}q_{l}\\
 & = & rp_{1}q_{1} \cdots p_{i-1}q_{i-1}p'_{i} (x_{a}\Delta_{H}x_{j_{i+1}} \Delta_{H} x_{j_{i+2}} \cdots x_{j_{i+r}} \Delta_{H})p_{i+r+1}q_{i+r+1}\cdots p_{l}q_{l}.
\end{eqnarray*}

Let us put 
\[ g_{w} = g_{j_{i+1}}\cdots g_{j+i+r}. \]
We call $g_{w}$ the {\em corresponding $\mG$-positive word (element)} of the distinguished subfactorization $w$.
Since $g_{i}=x_{i} \Delta_{H}$, 
\[ x_{a}\Delta_{H}x_{j_{i+1}} \Delta_{H} x_{j_{i+2}} \cdots x_{j_{i+r}} \Delta_{H} = g_{a}g_{j_{i+1}}\cdots g_{j_{i+r}} = g_{a}g_{w}. \]
Thus, if $w$ is a distinguished subfactorization in $\mF(x)$, by choosing $x_{a}$ we may express $x$ as
\[ x = rp_{1}q_{1} \cdots p_{i-1}q_{i-1}p'_{i}[g_{a}g_{w}]p_{i+r+1}q_{i+r+1}\cdots p_{l}q_{l}. \]
by using the corresponding $\mG$-positive word $g_{w}$.

Next we introduce a notion of {\em reducible distinguished subfactorization}.
Let $w$ be a distinguished subfactorization of $\mF(x)$ as taken in (\ref{eqn:dsubfac}).
Let us take $x_{u} \in \mX$ so that $p'_{i+r+1}=x_{u}^{-1}p_{i+r+1} \in PX \cup \{1 \}$. As for the choice of $x_{a}$ above, such $x_{u}$ may not unique. If such $x_{u}$ does not exist, that is, $p_{i+r+1}=1$, we take $x_{u}=1$. We say a distinguished subfactorization $w$ is {\em reducible} if for {\em any} choice of such $x_{a}$ and $x_{u}$, we have an inequality $g_{a}g_{w}g_{u} \geq_{G} z_{G}$. Otherwise, that is, if one can choose $x_{a}$ and $x_{u}$ so that $g_{a}g_{w}g_{u} <_{G} z_{G}$ holds, then we say $w$ is {\em irreducible}.

Now we define the notion of a {\em reduced standard factorization}, which plays an important role in the proof of both Property A and Property C.

\begin{definition}[Reduced standard factorization]
Let $\mF(x)= rp_{1}q_{1}\cdots p_{l}q_{l}$ be a standard factorization.
We say $\mF$ is {\em reduced} if $q_{i} <_{H} z_{H}$ for all $i$ and $\mF$ contains no reducible distinguished subfactorization.
\end{definition}

We say a distinguished subfactorization $w$ of a standard factorization $\mF$ is {\em maximal} if there is no other distinguished subfactorization $w'$ of $\mF$ whose corresponding $\mG$-positive word $g_{w'}$ contains $g_{w}$ as its subword.
For any $<_{G}$-positive elements $g,g',g''$, since $z_{G}$ is central, if $g \geq_{G} z_{G}$ then $g'gg'' \geq_{G} z_{G}$. Thus, to see whether a standard factorization is reducible or not, it is sufficient to check that all maximal distinguished subfactorization are irreducible. 

\begin{example}
A distinguished subfactorization and related notions are slightly complex, so here we give an example.
Let us consider the case $\mX=\{x_{1},x_{2}\}$ and take a standard factorization of the form 
\begin{equation}
\label{eqn:exam}
\mF(x) = (x_{1}x_{2})\Delta_{H} x_{1} \Delta_{H} x_{2} \Delta_{H}(x_{1}^{3}x_{2})h_{1},
\end{equation}
for example.

In the standard factorization (\ref{eqn:exam})
$w = \Delta_{H}  x_{1} \Delta_{H}$ is a distinguished subfactorization.
The corresponding $\mG$-positive word is $g_{w}= g_{1}$.
In this case, we may choose $x_{a} = x_{2}$ since $(x_{1}x_{2})x_{2}^{-1} = x_{1} \in PX$. So we are able to write $x$ as
\[ x = x_{1}[g_{2}g_{1}] x_{2} \Delta_{H} (x_{1}^{3}x_{2})h_{1}. \]

However, the distinguished sub-factorization $w$ is not maximal: it is included in another distinguished subfactorization $w' =  \Delta_{H} x_{1} \Delta_{H} x_{2} \Delta_{H}$, and we may write
\[ x = x_{1}[g_{2}(g_{1}g_{2})](x_{1}^{3}x_{2})h_{1}. \]
The distinguished subfactorization $w'$ is maximal.

Is $w'$ reducible ? To see this, first we need to determine all possibilities of $x_{a}$ and $x_{u}$ in the definition of reducible distinguished subfactorization. 
Assume that $(x_{1}x_{2})x_{1}^{-1} \in PX \cup\{1\}$, but $x_{2}^{-1}(x_{1}^{3}x_{2}) \not \in PX \cup\{1\}$. Then $x_{a}=x_{1}$ or $x_{2}$, and $x_{u}=x_{1}$.
Hence our definition says, $w'$ is reducible if and only if 
\[ g_{1}(g_{1}g_{2})g_{1}\geq_{G} z_{G},  \textrm{ and } g_{2}(g_{1}g_{2})g_{1}\geq_{G} z_{G}\]
hold.
\end{example}

First we show the existence of the reduced standard subfactorization.
The proof of the next lemma utilizes the standard form of amalgamated free products, and mainly works in the generating set $\{\mG,\mH\}$. 

\begin{lemma}
\label{lem:stand}
Every element $x \in X$ admits a reduced standard subfactorization.
\end{lemma}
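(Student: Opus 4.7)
Plan for the proof of the lemma.

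Starting from any standard factorization $\mF(x) = rp_{1}q_{1}\cdots p_{l}q_{l}$ of $x$, whose existence was already established in the paragraph preceding the statement, the plan is to alternately apply two kinds of local reductions until both defining conditions of \emph{reduced} hold simultaneously.

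\textbf{Stage 1 (bounding each $q_{i}$).} By the $<_{H}$-cofinality of $z_{H}$ (assumption \textbf{[CF(H)]}) and the $z_{H}$-right invariance of $<_{H}$ (assumption \textbf{[INV(H)]}), each $q_{i}$ admits a unique decomposition $q_{i}=z_{H}^{k_{i}}q_{i}'$ with $k_{i}\geq 0$ and $1\leq_{H} q_{i}' <_{H} z_{H}$; the standard-factorization constraint $q_{i}\neq z_{H}^{N}$ guarantees $q_{i}'>_{H} 1$ whenever $k_{i}\geq 1$. Since $z_{H}=z_{G}$ commutes with every $x_{j}$ by Lemma \ref{lem:com2}, the extracted power $z_{H}^{k_{i}}$ slides leftward past every $p$-block without obstruction. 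To traverse an adjacent $q$-block I would use $z_{H}=\Delta_{H}h_{1}=h_{1}\Delta_{H}$ (which commutes past itself by Lemma \ref{lem:commute}) and absorb the migrated power into the neighboring $q$-factor, possibly triggering further Stage 1 adjustments there, and iterate until every $q_{i}<_{H}z_{H}$.

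\textbf{Stage 2 (killing reducible maximal distinguished subfactorizations).} Let $w = q_{i}p_{i+1}q_{i+1}\cdots p_{i+r}q_{i+r}$ be such a subfactorization with corresponding $\mG$-word $g_{w}=g_{j_{i+1}}\cdots g_{j_{i+r}}$, and pick any admissible $x_{a},x_{u}$ so that $p_{i}=p_{i}'x_{a}$ and $p_{i+r+1}=x_{u}p_{i+r+1}'$ with $p_{i}',p_{i+r+1}'\in PX\cup\{1\}$. Reducibility gives $g_{a}g_{w}g_{u}\geq_{G} z_{G}$, so I write $g_{a}g_{w}g_{u}=z_{G}v$ with $v\geq_{G} 1$. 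The key computation is
\begin{equation*}
x_{a}\Delta_{H}x_{j_{i+1}}\Delta_{H}\cdots x_{j_{i+r}}\Delta_{H}x_{u} \;=\; g_{a}g_{w}g_{u}\Delta_{H}^{-1} \;=\; z_{G}v\Delta_{H}^{-1} \;=\; v h_{1},
\end{equation*}
using that $z_{G}$ is central in $G$ (so $z_{G}v=vz_{G}$) together with the identity $z_{H}\Delta_{H}^{-1}=h_{1}$ that follows from $\Delta_{H}=z_{H}h_{1}^{-1}$. Re-expressing the $\mG$-positive element $v$ as a $\mG$-positive word and then substituting $g_{j}=x_{j}\Delta_{H}$ reconverts the local block into the $\{\mX,\mH\}$-alphabet, giving a new standard factorization of $x$; I then reapply Stage 1 and continue.

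The main obstacle is termination of this iteration. The candidate complexity measure I would use is the pair (total number of $\Delta_{H}$-occurrences distributed across the $q_{j}$'s in their reduced $\mH$-expressions, $c(\mF)$), compared lexicographically. Each Stage 2 substitution should strictly decrease the first component because the rewrite $g_{a}g_{w}g_{u}\leadsto z_{G}v$ absorbs one full factor of $z_{G}=\Delta_{H}h_{1}$, so the reconversion of $vh_{1}$ contains at least one fewer $\Delta_{H}$ than the $r+1$ copies present in the original block. The delicate bookkeeping required to verify strict decrease, and to check that Stage 1 applied after Stage 2 does not re-inflate the count, is the principal technical burden of the proof; once it is in place, well-foundedness of the measure forces termination in finitely many steps and yields the desired reduced standard factorization.
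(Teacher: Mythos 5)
Your proposal takes a genuinely different route from the paper's primary proof of Lemma~\ref{lem:stand}, and there is a substantive gap in the termination argument that you already flag.

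The paper does not iterate at all for this lemma. It starts from the amalgamated free product normal form $x = q_{0}f_{1}q_{1}\cdots f_{l}q_{l}$ with $q_{i}\in H$, $f_{i}\in G$, $q_{i}\neq z_{H}^{N}$, $f_{i}\neq z_{G}^{N}$, normalizes each $f_{i}$ to $f_{i}^{*} = z_{G}^{-N_{i}}f_{i}$ with $1<_{G}f_{i}^{*}<_{G}z_{G}$ and each $\Delta_{H}q_{i}$ to $q_{i}^{*}$ with $1\leq_{H} q_{i}^{*} <_{H}\Delta_{H}$, slides all extracted $z$-powers to the far left via Lemma~\ref{lem:com2} and {\bf[INV(H)]}, and reads off a reduced standard factorization in one pass: since every $q_{i}^{*}\neq\Delta_{H}$, the maximal distinguished subfactorizations correspond exactly to the $\mG$-words $f_{i}^{*} = P_{i}(\mG)g_{k_{i}} <_{G}z_{G}$, so all of them are irreducible by construction. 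Your iterative scheme is closer in spirit to the paper's \emph{alternative} route (Lemma~\ref{lem:pre-reduced} followed by Lemma~\ref{lem:reducing}), which the paper deliberately presents \emph{after} Lemma~\ref{lem:stand} precisely because the termination bookkeeping is delicate.

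The concrete gap: your termination measure (number of $\Delta_{H}$-occurrences among the $q_{j}$'s) is not shown to decrease, and in general it does not. After replacing $g_{a}g_{w}g_{u}$ by $z_{G}v$, the element $v$ must be re-expanded as a $\mG$-positive word and then reconverted via $g_{j}=x_{j}\Delta_{H}$; the minimal $\mG$-length of $v$ is not bounded by $r+1$, so the new block can carry \emph{more} copies of $\Delta_{H}$ than the one you removed, not fewer. Moreover you extract only a single factor $z_{G}$, whereas the argument actually needs $z_{G}^{N}$ with $z_{G}^{N}<_{G}g_{a}g_{w}g_{u}\leq_{G}z_{G}^{N+1}$ to guarantee the leftover $v$ satisfies $v<_{G}z_{G}$, which is what makes the newly created distinguished subfactorization irreducible (the paper uses $g_{1}$-right invariance for this, via $g'=(z_{G}^{-N}g_{a}g_{w}g_{u})g_{1}^{-1}$). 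Finally, the case split on whether $p_{i}'$ or $p_{s}'$ is empty is missing; in that degenerate case merging blocks can create a fresh reducible subfactorization, and the paper only gets $d(\mF'')\leq d(\mF)$ there and falls back on $c(\mF'')<c(\mF)$, which is why the lexicographic pair $(d(\mF),c(\mF))$ is used rather than any single count. To rescue your plan you would essentially have to reproduce Lemma~\ref{lem:reducing}, at which point the direct normal-form construction is shorter and cleaner.
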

\begin{proof}
Since $X$ is an amalgamated free product of $G$ and $H$, every $x \in X$ is written as
\[ x= q_{0}f_{1}q_{1}f_{2}q_{2} \cdots f_{l} q_{l}\]
where $ q_{i} \in H$, $f_{i} \in G$, and $q_{i} \neq z_{H}^{N}$ and $f_{i} \neq z_{G}^{N}$ for any $N \in \Z$ and $i>0$.

Since $z_{G}$ is $<_{G}$-cofinal, for each $i>0$ there exists $N_{i} \in \Z$ which satisfies 
\[  z_{G}^{N_{i}} <_{G} f_{i} <_{G} z_{G}^{N_{i}+1}.\]
We put $f_{i}^{*}=z_{G}^{-N_{i}}f_{i}$. Then $f_{i}^{*}$
 satisfies the inequality
\[ 1 <_{G} f_{i}^{*} <_{G} z_{G}\]
Similarly, since $z_{H}$ is $<_{H}$-cofinal, for each $i>0$ there exists $M_{i}$ which satisfies the inequality
\[ z_{H}^{M_{i}} \leq_{H} \Delta_{H}q_{i}<_{H} z_{H}^{M_{i}+1}.\]

Let $L_{i} = \sum_{j>i} (N_{j}+M_{j})$, and put $q_{i}^{*}=z_{H}^{-L_{i}}(z_{H}^{-M_{i}}\Delta_{H}q_{i})z_{H}^{L_{i}}$.
Since $<_{H}$ is a $z_{H}$-right invariant ordering, $1 \leq_{H} q_{i}^{*} <_{H} z_{H}$ holds. We have assumed that $q_{i} \neq z_{H}^{N}$, so we have $q_{i}^{*} \neq \Delta_{H}$. Thus, $1\leq_{H} q^{*}_{i} <_{H} \Delta_{H}$.

Then we get a reduced standard factorization of $x$ as follows.
First we modify the first expression of $x$ as
\begin{eqnarray*}
x &=& q_{0}f_{1}q_{1} \cdots f_{l}q_{l} \\
& = & q_{0}(z_{G}^{N_{1}}f^{*}_{1}) q_{1}(z_{G}^{N_{2}}f^{*}_{2}) \cdots (z_{G}^{N_{l-1}}f^{*}_{l-1})q_{l-1}(z_{G}^{N_{l}} f^{*}_{l})q_{l}\\
& = & (q_{0}z_{H}^{N_{1}})f^{*}_{1} (q_{1}z_{H}^{N_{2}}) \cdots f^{*}_{l-1}(q_{l-1}z_{H}^{N_{l}}) f^{*}_{l}q_{l}\\
& = & (q_{0}z_{H}^{N_{1}})f^{*}_{1} (q_{1}z_{H}^{N_{2}}) \cdots 
f^{*}_{l-1}(q_{l-1}z_{H}^{N_{l}}) f^{*}_{l}\Delta_{H}^{-1}z_{H}^{M_{l}}(z_{H}^{-M_{l}}\Delta_{H}q_{l})\\
& = & (q_{0}z_{H}^{N_{1}})f^{*}_{1} (q_{1}z_{H}^{N_{2}}) \cdots 
f^{*}_{l-1}(q_{l-1}z_{H}^{N_{l}}z_{H}^{M_{l}})(f_{l}^{*}\Delta_{H}^{-1})q_{l}^{*}\\
& = & (q_{0}z_{H}^{N_{1}})f^{*}_{1} (q_{1}z_{H}^{N_{2}}) \cdots 
f^{*}_{l-1}\Delta_{H}^{-1} z_{H}^{N_{l}+M_{l}}( z_{H}^{-N_{l}-M_{l}}  \Delta_{H}q_{l-1}z_{H}^{N_{l}+M_{l}}) (f^{*}_{l}\Delta_{H}^{-1})q_{l}^{*}\\
& = & (q_{0}z_{H}^{N_{1}})f^{*}_{1} (q_{1}z_{H}^{N_{2}}) \cdots   z_{H}^{N_{l}+M_{l}}(f_{l-1}^{*}\Delta_{H}^{-1})q_{l-1}^{*} (f^{*}_{l}\Delta_{H}^{-1})q_{l}^{*}\\
& = & \cdots\\
& = & (q_{0}z_{H}^{L_{0}}) (f_{1}^{*}\Delta_{H}^{-1})q_{1}^{*} \cdots (f_{l-1}^{*}\Delta_{H}^{-1})q_{l-1}^{*} (f^{*}_{l}\Delta_{H}^{-1})q_{l}^{*}.
\end{eqnarray*}

Now let us write $f^{*}_{i} = P_{i}(\mG)g_{k_{i}}$, where $P_{i}(\mG)$ is a $\mG$-positive or an empty word. 
Since $g_{i}=x_{i}\Delta_{H}$, we may express $P_{i}(\mG)$ as
 an $\{\mX,\mH\}$-positive (or, empty) word. Hence by rewriting each $P_{i}(\mG)$ as
 an $\{\mX,\mH\}$-positive (or, empty) word, we get a standard factorization 
\[ \mF(x)= (q_{0}z_{H}^{L_{0}}) [P_{1}(\mG)] x_{k_{1}} q_{1}^{*} \cdots [P_{l-1}(\mG)] x_{k_{l-1}} q_{l-1}^{*} [P_{l}(\mG)] x_{k_{l}} q_{l}^{*}. \]

Recall that $q_{i}^{*} \neq \Delta_{H}$ for all $i$. Thus for a maximal distinguished subfactorization $w$ in $\mF(x)$, we may choose $g_{a}$ and $g_{u}$ so that $g_{a}g_{w}g_{u} = P_{i}(\mG)g_{k_{i}}$ holds for some $i$.
Since $P_{i}(\mG)g_{k_{i}} = f_{i}^{*} <_{G} z_{G}$, this implies that all distinguished sub-factorizations are irreducible. 
Hence $\mF(x)$ is a reduced standard factorization.
\end{proof}

\subsection{Reducing operation and the proof of Property A}

In the proof of Lemma \ref{lem:stand} given in previous section, we mainly used the generating set $\{\mG,\mH\}$. In this section we give an alternative way to get a reduced standard factorization, which works mainly in the generating set $\{\mX,\mH\}$. 

We say a standard factorization $\mF(x) = rp_{1}q_{1}\cdots p_{l}q_{l}$ is {\em pre-reduced} if $1<_{H} q_{i} <_{H} z_{H}$ holds for all $i$.
It is rather easy to see pre-reduced standard factorization exists.

\begin{lemma}[Existence of pre-reduced standard factorization]
\label{lem:pre-reduced}

Every element $x \in X$ admits a pre-reduced standard factorization.
\end{lemma}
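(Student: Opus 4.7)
The plan is to start with any standard factorization $\mF(x)=rp_1q_1\cdots p_lq_l$ of $x$ --- whose existence is already given by the construction in Section \ref{sec:rsf} (essentially the one carried out in Lemma \ref{lem:stand}) --- and rewrite each middle factor $q_i$ so that it lies strictly between $1$ and $z_H$ in the ordering $<_H$.

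First I would apply the $<_H$-cofinality of $z_H$ (which holds under our hypotheses [CF(H)] and [INV(H)] by Lemma \ref{lem:cofinal}) to each $q_i$: choose $N_i\in\Z$ with $z_H^{N_i}\leq_H q_i<_H z_H^{N_i+1}$ and set $q_i^*=z_H^{-N_i}q_i$, so $q_i^*\in[1,z_H)$. The standard-factorization clauses $q_i>_H 1$ for $i<l$ and $q_i\neq z_H^N$ for $N>0$ immediately force $q_i^*\neq 1$ in every case except possibly $i=l$ with $q_l=1$. Next, because $z_H$ commutes with every $x_j\in\mX$ (Lemma \ref{lem:com2}) and hence with every $p_j\in PX$, each extracted factor $z_H^{N_i}$ can be slid freely leftward through $p_i$; when it meets $q_{i-1}$ it multiplies it, whereupon the cofinality step is reapplied to the updated $q_{i-1}$. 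Sweeping from $i=l$ down to $i=1$ and finally absorbing any remaining $z_H$-power into $r$ produces a new standard factorization $\tilde r\,p_1q_1^*\cdots p_lq_l^*$ with $q_i^*\in[1,z_H)$ for every $i$ and the strict inequality $q_i^*>_H 1$ automatic for every $i<l$.

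The main obstacle I expect is the edge case $q_l^*=1$, which arises exactly when the original tail satisfies $q_l=1$: the resulting factorization then ends in $\ldots p_l\cdot 1$ and fails to be pre-reduced. To eliminate this case I would peel off the last letter of $p_l$, writing $p_l=\tilde p_l x_a$ with $x_a\in\mX$, and use the identity $x_a^{-1}=z_H^{-1}\Delta_H P_a(\mX,\mH)$ from Section \ref{sec:rsf} together with $\Delta_H h_1=z_H$ (Lemma \ref{lem:DeltaH}) to substitute an equivalent expression that introduces a non-trivial $\mH$-tail after a shortened $PX$-block. A final pass of the cofinality-and-sliding procedure above normalises the inner $q_i$'s, and the new terminal factor is a non-identity element of $H$ bounded above by $z_H$ thanks to [CF(H)]. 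The delicate point is to check that every one of the standard-factorization constraints (in particular $q_i\neq z_H^N$ for $N>0$ and $q_i>_H 1$ for $i<l$) survives the substitution, but this uses only Lemma \ref{lem:com2}, the cofinality of $z_H$, and the identities relating $h_1$, $\Delta_H$, and $z_H$ already established in Section \ref{sec:assump}.
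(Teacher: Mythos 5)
Your main computation, sliding powers of $z_H$ leftward via Lemma \ref{lem:com2} and controlling the exponents with the $<_H$-cofinality of $z_H$ and $z_H$-right invariance, is exactly the paper's argument; the paper merely records the cumulative effect of your right-to-left sweep in the closed conjugation formula $q_i^* = z_H^{-L_{i+1}}(z_H^{-M_i}q_i)z_H^{L_{i+1}}$, and the two computations coincide.

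The tail-repair step, however, does not work. Inverting $x_a^{-1}=z_H^{-1}\Delta_H P_a(\mX,\mH)$ yields $x_a = P_a(\mX,\mH)^{-1}\Delta_H^{-1}z_H$, which replaces the positive letter $x_a$ by an $\{\mX,\mH\}$-\emph{negative} word; once the stray $z_H$ slides back to the left (again by Lemma \ref{lem:com2}), the rewriting is circular and no positive $\mH$-tail appears. In fact the obstruction you noticed is genuine and cannot be repaired: the element $x_1 = g_1\Delta_H^{-1}$ has alternating normal-form length $2$ of coset type $(G,H)$ in $X=G*_{\langle z_G=z_H\rangle}H$, and a direct comparison of normal forms shows that no factorization $rp_1q_1\cdots p_lq_l$ with every $q_i$ strictly between $1$ and $z_H$ can equal $x_1$. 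The definition of ``pre-reduced'' should therefore be read with $1\leq_H q_l<_H z_H$ on the final factor, matching the convention $q_l\geq_H 1$ in the definition of a standard factorization. Under that reading the case $q_l=1$ is harmless (your sweep simply outputs $q_l^*=1$) and no repair is needed. Note that the paper's own proof, which takes $M_i\geq 0$ with $z_H^{M_i}<_H q_i$ for each $i$, likewise silently presupposes $q_l>_H 1$ and omits this case; with the weaker reading both proofs go through, and every subsequent use of the lemma in the paper is compatible with it.
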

\begin{proof}
Let $ \mF(x) = rp_{1}q_{1}\cdots p_{l}q_{l}$ 
be a standard factorization.
For each $i$, take $M_{i}\geq 0$ so that $z_{H}^{M_{i}} <_{H} q_{i} <_{H} z_{H}^{M_{i}+1}$.
Let  $L_{i} = \sum_{j\geq i} M_{i}$ and $q_{i}^{*} = z_{H}^{-L_{i}}q_{i}z_{H}^{L_{i+1}} =  z_{H}^{-L_{i+1}}(z_{H}^{-M_{i}}q_{i})z_{H}^{L_{i+1}}$. Since $<_{H}$ is $z_{H}$-right invariant, $1<_{H} q^{*}_{i} <_{H} z_{H}$. Therefore, we get a pre-reduced standard factorization 
\begin{eqnarray*}
x & = & rp_{1}q_{1}\cdots p_{l}q_{l}\\
  & = & rp_{1}q_{1}\cdots p_{l-1}q_{l-1}p_{l}(z_{H}^{M_{l}}q^{*}_{l}) \\
  & = & rp_{1}q_{1}\cdots p_{l-1}(q_{l-1}z_{H}^{M_{l}})p_{l}q^{*}_{l}\\
  & = & rp_{1}q_{1}\cdots p_{l-1}z_{H}^{-M_{l}-M_{l-1}}(z_{H}^{-M_{l}-M_{l-1}}q_{l-1}z_{H}^{M_{l}})p_{l}q^{*}_{l}\\
  & = & rp_{1}q_{1}\cdots p_{l-1}z_{H}^{-L_{l-1}}q^{*}_{l-1}p_{l}q^{*}_{l}\\
  & = & \cdots \\
  & = & (rz_{H}^{-L_{0}})p_{1}q^{*}_{1} \cdots p_{l}q^{*}_{l}.
  \end{eqnarray*}
\end{proof}

To show we are actually able to get reduced standard factorization, we observe that we are able to eliminate all reducible distinguished subfactorizations. Let $d(\mF)$ be the number of maximal reducible distinguished subfactorizations. The next lemma gives alternative proof that a reduced standard factorization exists. It says that by induction on $(d(\mF),c(\mF))$ for pre-reduced factorization $\mF$, we are able to get reduced standard factorization. 

\begin{lemma}[Reducing operation]
\label{lem:reducing}
Let $\mF(x) = rp_{1}q_{1}\cdots p_{l}q_{l}$ be a pre-reduced standard factorization of $x \in X$. 
If $\mF(x)$ contains a reducible distinguished subfactorization, then we can find another pre-reduced standard factorization $\mF'(x) = r'p'_{1}q'_{1}\cdots $ which satisfies $d(\mF') < d(\mF)$ or, $d(\mF')=d(\mF)$ and $c(\mF') < c(\mF)$.
Moreover, if $r>_{H} 1$ then $r'>_{H}1$.
\end{lemma}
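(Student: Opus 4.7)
The plan is to take a maximal reducible distinguished subfactorization $w = q_i p_{i+1} q_{i+1} \cdots p_{i+r} q_{i+r}$ in $\mathcal{F}(x)$ — for definiteness, the leftmost one — and to explicitly rewrite the segment $p_i w p_{i+r+1}$ using two algebraic identities: $x_a \Delta_H = g_a$ and $z_G \Delta_H^{-1} = h_1$. Both follow from $\Delta_H = z_H h_1^{-1}$, $x_j = g_j z_H^{-1} h_1$, the centrality of $z_G = z_H$ in $G$, and the commutativity of $z_H$ with $h_1$ given by Lemma \ref{lem:commute} (which applies thanks to the invariance hypothesis \textbf{[INV(H)]}).

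Iterating the first identity gives $\Delta_H x_{j_{i+1}} \Delta_H \cdots x_{j_{i+r}} \Delta_H = h_1^{-1} g_w z_H$; together with $x_u = g_u \Delta_H^{-1}$ this yields, for any choice of $x_a, x_u \in \mX$ making $p'_i := p_i x_a^{-1}$ and $p''_{i+r+1} := x_u^{-1} p_{i+r+1}$ lie in $PX \cup \{1\}$,
\[
p_i\, w\, p_{i+r+1} \;=\; p'_i \, g_a g_w g_u \, \Delta_H^{-1} \, p''_{i+r+1}.
\]
By reducibility of $w$ we may write $g_a g_w g_u = z_G g'$ in $G$ with $g' \geq_G 1$, and applying $z_G \Delta_H^{-1} = h_1$ together with the centrality of $z_G$ simplifies this to
\[
p_i\, w\, p_{i+r+1} \;=\; p'_i \, g' h_1 \, p''_{i+r+1}.
\]

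The argument then splits into two cases. If $g' = 1$, the segment becomes $p'_i h_1 p''_{i+r+1}$; the $r+2$ old pairs collapse to essentially the single pair $(p'_i, h_1)$, with trivial $p'_i$ or $p''_{i+r+1}$ absorbed into neighbouring data. Pre-reducedness is immediate since $1 <_H h_1 <_H z_H$ and $h_1 \neq z_H^N$ for any $N > 0$. If $g' \neq 1$, write $g' = g_{k_1} \cdots g_{k_s}$ as a $\mG$-positive word; using $g_k = x_k \Delta_H$ and $\Delta_H h_1 = z_H$, the segment becomes
\[
p'_i \, x_{k_1} \Delta_H\, x_{k_2} \Delta_H \cdots x_{k_s} z_H \, p''_{i+r+1},
\]
with a spurious $z_H$ on the right. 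Because $z_H$ commutes with every element of $\mX$, every $\Delta_H$, and every $PX$-element by Lemma \ref{lem:com2}, and because the conjugation $q_j \mapsto z_H^{-1} q_j z_H$ preserves both $1 <_H q_j <_H z_H$ and $q_j \neq z_H^N$ by the $z_H$-right invariance of $<_H$, the spurious $z_H$ can be pushed leftward through the prefix and absorbed into $r$, producing $r' = z_H r$. Pre-reducedness of the resulting $\mathcal{F}'(x)$ then follows, as the new intermediate $q$'s are all $\Delta_H$, which satisfies $1 <_H \Delta_H <_H z_H$ and $\Delta_H \neq z_H^N$. The \emph{moreover} clause is immediate: in the first case $r' = r$, and in the second $r' = z_H r >_H z_H >_H 1$ whenever $r >_H 1$.

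The main obstacle is the termination count. The subfactorization $w$ itself has been removed, so it no longer contributes to $d(\mathcal{F}')$; however, when $g' \neq 1$ the rewritten middle segment $p'_i x_{k_1} \Delta_H \cdots \Delta_H x_{k_s} p''_{i+r+1}$ may host new reducible distinguished subfactorizations. Taking $w$ leftmost and maximal confines all modifications to the middle, so the prefix contributes no new reducible subfactorizations; comparing the $\mG$-positive word $g'$ to $g_a g_w g_u$ via the identity $g_a g_w g_u = z_G g'$ then shows that any newly introduced reducible subfactorization in the middle is strictly ``smaller'' than $w$ in a sense that forces the pair $(d(\mathcal{F}'), c(\mathcal{F}'))$ to drop lexicographically below $(d(\mathcal{F}), c(\mathcal{F}))$.
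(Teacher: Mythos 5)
Your algebraic identities ($x_a\Delta_H=g_a$, $z_G\Delta_H^{-1}=h_1$, the rewriting $p_i\,w\,p_{i+r+1}=p'_i\,g_ag_wg_u\,\Delta_H^{-1}\,p''_{i+r+1}$) are all correct, and the treatment of the prefix conjugation and of the ``moreover'' clause is essentially right. The problem is the termination count, and it is not a matter of missing detail: as written, the claimed lexicographic decrease of $(d,c)$ is false.

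You factor out a single power of $z_G$, writing $g_ag_wg_u=z_G g'$ with $g'\geq_G 1$, and then rewrite $g'=g_{k_1}\cdots g_{k_s}$ as a $\mG$-positive word. But reducibility only gives $g_ag_wg_u\geq_G z_G$; nothing prevents $g'=z_G^{-1}g_ag_wg_u$ from itself being $\geq_G z_G$. In that case the new distinguished subfactorization sitting inside $x_{k_1}\Delta_H x_{k_2}\cdots\Delta_H x_{k_s}$ has corresponding element a subword of $g'$ whose extension $g_{k_1}\cdots g_{k_s}=g'$ is still $\geq_G z_G$, so it can be reducible and $d$ does not drop. Nor can you fall back on $c$: the number of pairs contributed by the rewritten middle segment is governed by the length $s$ of a $\mG$-positive expression of $g'$, which is not bounded by the length $r+2$ of the segment you removed, so $c(\mF')$ can strictly \emph{increase}. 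Your final paragraph asserts that the new subfactorizations are ``strictly smaller in a sense that forces $(d,c)$ to drop,'' but this is precisely what fails. The paper avoids this by extracting the \emph{maximal} power, i.e.\ choosing $N$ with $z_G^N<_G g_ag_wg_u\leq_G z_G^{N+1}$ and setting $g'=(z_G^{-N}g_ag_wg_u)g_1^{-1}$, so that $1\leq_G g'<_G z_Gg_1^{-1}$; the inserted $x_1$ (from $g_1=x_1\Delta_H$) then guarantees that the newly created distinguished subfactorization $w'$ satisfies $g_{a'}g_{w'}g_1=g'g_1<_G z_G$, hence $w'$ is \emph{irreducible} by construction, and $d(\mF')<d(\mF)$ outright. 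That is the ingredient your argument is missing.

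A secondary gap: in your $g'=1$ case you say that a trivial $p'_i$ or $p''_{i+r+1}$ is ``absorbed into neighbouring data,'' but this absorption can create a new distinguished subfactorization when, say, $q_{i-1}h_1=\Delta_H$, and that new one can be reducible. In this subcase $d$ may stay put, and one must verify that $c$ strictly decreases instead (which it does, since a pair disappears). This is the one genuine place where the $c$-clause of the lemma is needed, and the paper spells it out; your sketch skips it.
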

\begin{proof}
Let $w=q_{i}p_{i+1}\cdots p_{s-1}q_{s-1}$ be a reducible maximal distinguished subfactorization in $\mF(x)$.
Thus, we may assume that the pre-reduced standard factorization $\mF(x)$ is written as
\[ \mF(x) =  r p_{1} q_{1} \cdots p_{i-1}q_{i-1}p'_{i}[g_{a}g_{w}]x_{u}p'_{s}q_{s} \cdots  p_{l}q_{l}\]
where 
\begin{enumerate}
\item $p'_{i}=p_{i}x_{a}^{-1}$ and $p'_{s}=x_{u}^{-1}p_{s}$
\item $p'_{i},p'_{s} \in PX \cup\{1\}$.
\item $g_{a}g_{w}g_{u} \geq_{G} z_{G}$. 
\end{enumerate} 

Now take $N>0$ so that $z_{G}^{N} <_{G} g_{a}g_{w}g_{u} \leq_{G} z_{G}^{N+1}$, and for $j<i$ let $q^{*}_{j} = z_{H}^{-N}q_{j} z_{H}^{N}$.
Then we may write $x$ as
\begin{eqnarray*}
x & = & r p_{1} q_{1} \cdots p_{i-1}q_{i-1}p'_{i}[g_{a}g_{w}]x_{u}p'_{s}q_{s} \cdots  p_{l}q_{l}\\
 & = & r p_{1} q_{1} \cdots p_{i-1}q_{i-1}p'_{i} z_{G}^{N}(z_{G}^{-N}g_{a}g_{w}g_{u})\Delta_{H}^{-1} p'_{s}q_{s} \cdots  p_{l}q_{l}\\
 & = & (rz_{H}^{N}) p_{1} q^{*}_{1} \cdots p_{i-1}q^{*}_{i-1} p'_{i} (z_{G}^{-N}g_{a}g_{w}g_{u}) \Delta_{H}^{-1} p'_{s}q_{s} \cdots  p_{l}q_{l}.
 \end{eqnarray*}
 
 First assume that  $(z_{G}^{-N}g_{a}g_{w}g_{u})=z_{G}=z_{H}$. Then we write $x$ as
 \begin{eqnarray*}
x & = &(rz_{H}^{N}) p_{1} q^{*}_{1} \cdots p_{i-1}q^{*}_{i-1} p'_{i}(z_{H}\Delta_{H}^{-1}) p'_{s} q_{s} \cdots  p_{l}q_{l}\\
& = & (rz_{H}^{N}) p_{1} q^{*}_{1} \cdots p_{i-1}q^{*}_{i-1}p'_{i}h_{1}p'_{s}q_{s} \cdots  p_{l}q_{l}
\end{eqnarray*}

If $p'_{i} \neq 1$ and $p'_{s} \neq 1$, then we get a pre-reduced standard factorization 
\begin{equation}
\label{eqn:red1}
 \mF'(x) = (rz_{H}^{N}) p_{1} q^{*}_{1} \cdots p_{i-1}q^{*}_{i-1}p'_{i} h_{1} p'_{s}q_{s} \cdots  p_{l}q_{l}. 
\end{equation}
In $\mF'(x)$, we removed the reducible distinguished subfactorization $w$ and no distinguished subfactorization is created, so $d(\mF') < d(\mF)$.

If $p'_{i}=1$ or $p'_{s} = 1$, then the standard factorization (\ref{eqn:red1}) might fail to be pre-reduced. We construct a pre-reduced standard factorization $\mF''$ from the standard factorization (\ref{eqn:red1}) by using the argument of proof of Lemma \ref{lem:pre-reduced}. In such case, we might produce one new reducible maximal distinguished subfactorization, so in general $d(\mF'') \leq d(\mF)$ although we have removed $w$ from $\mF(x)$. 
In this case we have $c(\mF'') < c(\mF)$.

(Here is a simple example where $d(\mF'')$ does not decrease: assume that $p'_{i}=1$, $p'_{s} \in \mX$, and $q^{*}_{i-1}h_{1} = \Delta_{H}$, and that $w=q_{s}\cdots$ is a distinguished subfactorization: then we get a new maximal distinguished subfactorization $w = \cdots (q^{*}_{i-1}h_{1}) p'_s q_s \cdots $ in $\mF'(x)$. This maximal distinguished subfactorization might be reducible, so $d(\mF'')=d(\mF)$ may occur.)

Next assume that $(z_{G}^{-N}g_{a}g_{w}g_{u}) \neq z_{G}$. Let us put $g' =  (z_{G}^{-N}g_{a}g_{w}g_{u})g_{1}^{-1}$ and write $x$ as
\begin{eqnarray*}
\mF(x) &=& (rz_{H}^{N}) p_{1} q^{*}_{1} \cdots p_{i-1}q^{*}_{i-1} p'_{i}(z_{G}^{-N}g_{a}g_{w}g_{u})\Delta_{H}^{-1} p'_{s}q_{s} \cdots  p_{l}q_{l}\\
& = & (rz_{H}^{N}) p_{1} q^{*}_{1} \cdots p_{i-1}q^{*}_{i-1}p'_{i} g'g_{1}\Delta_{H}^{-1} p'_{s}q_{s} \cdots  p_{l}q_{l}\\
  & = & (rz_{H}^{N}) p_{1} q^{*}_{1} \cdots p_{i-1}q^{*}_{i-1}p'_{i}g' (x_{1}p'_{s})q_{s} \cdots  p_{l}q_{l}.
 \end{eqnarray*} 

If $g'=1$, then we get a pre-reduced standard factorization
\[ \mF'(x) = (rz_{H}^{N}) p_{1} q^{*}_{1} \cdots p_{i-1}q^{*}_{i-1}(p'_{i} x_{1}p'_{s})q_{s} \cdots  p_{l}q_{l}. \]
such that $d(\mF') < d(\mF)$.

If $g'>_{G}1$, then let us write $g' = g_{a'}P(\mG)$ where $P(\mG)$ is a $\mG$-positive, or empty word. By rewriting $P(\mG)$ as an $\{\mX,\mH\}$-positive word, we get a new pre-reduced standard factorization 
\[ \mF'(x) = (rz_{H}^{N}) p_{1} q^{*}_{1} \cdots p_{i-1}q^{*}_{i-1}p'_{i} [g_{a'}P(\mG)] (x_{1}p'_{s})q_{s} \cdots  p_{l}q_{l}. \]

Observe that $P(\mG)$ gives rise to a maximal distinguished subfactorization $w'$ in $\mF'(x)$ such that $g_{w'}=P(\mG)$. By Lemma \ref{lem:minimal}, $<_{G}$ is a $g_{1}$-right invariant ordering, so $1 \leq_{G} g' <_{G} z_{G}g_{1}^{-1}$, so $g_{a'} g_{w'} g_{1} <_{G} z_{G}$. Hence the maximal distinguished subfactorization $w'$ in $\mF'(x)$ is irreducible. 
By construction, all other maximal reducible distinguished subfactorizations in $\mF'(x)$ are derived from the pre-reduced factorization $\mF(x)$. Since we have removed the maximal reducible distinguished subfactorization $w$ in $\mF(x)$, so $d(\mF') < d(\mF)$.

Moreover, by construction we have always $r \leq_{H} r'$. In particular, $1<_{H}r'$ if $1<_{H}r$,
\end{proof}

Now we are ready to prove Property A.

\begin{proposition}[Property A]
\label{prop:PropertyA}
If $x$ is expressed as an $\{\mX, \mH\}$-positive word, then $x \neq 1$.
\end{proposition}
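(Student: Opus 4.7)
My plan is to convert the positive word $W$ into a reduced standard factorization and then read off the amalgamated free product normal form of $x = \pi(W)$ in $X = G *_{\langle z \rangle} H$. First I dispose of the case when $W$ contains no $\mathcal{X}$-letter: then $W$ is $\mathcal{H}$-positive and represents an element of $H \hookrightarrow X$, so $x \neq 1$ by Property A for $<_H$ (Corollary~\ref{cor:iso} applied to $(H,\mathcal{H})$).

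Now suppose $W$ contains at least one $\mathcal{X}$-letter. I parse $W = R\,P_1 Q_1 \cdots P_l Q_l$ with $l \geq 1$, where $R$ and each $Q_i$ are $\mathcal{H}$-positive words (possibly empty) and each $P_i$ is a non-empty $\mathcal{X}$-positive word. Applying the pre-reducing procedure of Lemma~\ref{lem:pre-reduced}---which extracts non-negative powers of $z_H$ from each $Q_i$ and slides them leftward past the $\mathcal{X}$-blocks using Lemma~\ref{lem:com2}---yields a pre-reduced standard factorization $\mathcal{F}_0(x)$ whose leading $H$-part $r_0$ satisfies $r_0 \geq_H R \geq_H 1$. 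I then iterate Lemma~\ref{lem:reducing} to obtain a reduced standard factorization $\mathcal{F}^*(x) = r^* p_1^* q_1^* \cdots p_{l^*}^* q_{l^*}^*$; since each reducing step replaces $r$ by $r z_H^N$ with $N \geq 0$, we have $r^* \geq_H 1$.

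The crux of the proof is to show that either $l^* \geq 1$ or $r^* >_H 1$, each of which forces $x \neq 1$. When $l^* \geq 1$, I expand $\mathcal{F}^*$ into the amalgamated normal form of $x$: each $p_i^*$ becomes $g_{i_1}^{(i)} \Delta_H^{-1} \cdots g_{i_{k_i}}^{(i)} \Delta_H^{-1}$, with each $g_{i_j}^{(i)} \in G \setminus \langle z_G \rangle$ (from $1 <_G g_{i_j}^{(i)} <_G z_G$ by \textbf{[CF(G)]}) and each interior $\Delta_H^{-1} \in H \setminus \langle z_H \rangle$ (since $h_1 \notin \langle z_H \rangle$ as $H \neq \langle z_H \rangle$, cf.\ Lemma~\ref{lem:cofinal}). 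At junctions where $q_i^* = \Delta_H$, adjacent $g$-elements merge, and irreducibility of the associated maximal distinguished subfactorizations guarantees that each merged $G$-product lies in $(1, z_G) \subset G \setminus \langle z_G \rangle$. At junctions where $q_i^* \neq \Delta_H$, Lemma~\ref{lem:DeltaH} gives $\Delta_H^{-1} q_i^* \notin \langle z_H \rangle$. Thus $\mathcal{F}^*$ encodes a genuine amalgamated normal form of $x$ with at least one $G$-syllable (from $p_1^*$), forcing $x \neq 1$. The alternative $l^* = 0$ arises only through the ``$g_a g_w g_u = z_G$'' branch of Lemma~\ref{lem:reducing}, which introduces an $h_1$ factor into the $r$-block; then $r^* \geq_H h_1 >_H 1$, and again $x = r^* \neq 1$.

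The main obstacle is the bookkeeping: I must carefully track the $r$-part and the complexity $l$ through iterated pre-reducing and reducing steps, and verify that the expansion of $\mathcal{F}^*$ is an honest amalgamated normal form---i.e.\ that all intermediate $G$- and $H$-syllables lie outside $\langle z \rangle$, and that the $l^* = 0$ collapse indeed always forces an $h_1$ contribution into $r^*$.
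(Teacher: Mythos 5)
Your proof is correct and follows essentially the same route as the paper: convert the positive word to a reduced standard factorization (via Lemmas~\ref{lem:pre-reduced} and~\ref{lem:reducing}), then read off an amalgamated normal form in which every $G$-syllable lies strictly between $1$ and $z_G$ and every interior $H$-syllable lies outside $\langle z_H\rangle$. The one place where you are more careful than the paper is the degenerate case $l^*=0$: the paper's final step ``$x=W_0V_1W_1\cdots V_nW_n\neq 1$ by the amalgam normal form'' tacitly assumes $n\geq 1$, whereas you correctly observe that reducing operations can collapse the complexity to zero (through the $g_ag_wg_u\in\langle z_G\rangle$ branch) and handle this by tracking the monotone growth of the leading $H$-block $r$, showing that $r^*>_H 1$ whenever the collapse occurs; this cleanly closes the small gap.
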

\begin{proof}
Assume that $x$ is expressed by an $\{\mX,\mH\}$-positive word. Such a word expression can be modified to a standard factorization which is also an $\{\mX,\mH\}$-positive word: By the proof of Lemma \ref{lem:pre-reduced}, we can modify such a standard factorization so that it is pre-reduced, preserving the property that it is also an $\{\mX,\mH\}$-positive word. 
By Lemma \ref{lem:reducing}, we may modify the $\{\mX,\mH\}$-positive pre-reduced standard expression $\mF(x)$ so that it is an $\{\mX,\mH\}$-positive reduced standard factorization.

Now let us rewrite $\mF(x)$ as a word on $\{\mG,\mH\}$ as follows.
Let $w$ be a maximal distinguished subfactorization in $\mF(x)$ so we may write $\mF(x)$ as
\[ \mF(x)= r p_{1} q_{1} \cdots p_{i-1}q_{i-1}p'_{i}[g_{a}g_{w}]p_{s}q_{s} \cdots  p_{l}q_{l} \]
Since $w$ is irreducible, we may choose $g_{a}$ and $x_{u} \in \mX$ so that $p'_{s} = x_{u}^{-1} p_{s}, p'_{i} \in PX \cup\{1\}$ and that $g_{a}g_{w}g_{u} <_{G} z_{G}$.
Then we write $x$ as
\[ \mF(x) = r p_{1} q_{1} \cdots p_{i-1}q_{i-1}p'_{i} (g_{a}g_{w}g_{u}) \Delta_{H}^{-1} p'_{s}q_{s} \cdots  p_{l}q_{l}, \]
and regard $(g_{a}g_{w}g_{u})$ as a $\mG$-positive word.

Iterating this rewriting procedure for each maximal distinguished subword, and rewriting the rest of $x_{i}$ in $\mF(x)$ as a word on $\{\mG,\mH\}$ by using the relation $x_{i}=g_{i}\Delta_{H}^{-1}$, we finally write $x$ as
\[ x = W_{0}V_{1}W_{1}\cdots V_{n}W_{n}\] 
where $W_{i}$ is a word on $\mH^{\pm 1}$ and $V_{i}$ is a word on $\mG^{\pm 1}$. 
By construction, $V_{i} \in \mX$ or $V_{i} = g_{a}g_{w}g_{u}$ where $g_{w}$ is a maximal distinguished subfactorization in $\mF(x)$. Since we have chosen $g_{a}g_{w}g_{u} <_{G} z_{G}$, this implies that, $V_{i} \not \in \langle z_{G} \rangle$ for all $i$. Similarly, the assumption that $\mF(x)$ is reduced implies that we may choose $W_{i} \not \in  \langle z_{H} \rangle$ for $i>0$. This implies that $x \neq 1$, since $X= G*_{\langle z_{G}=z_{H} \rangle} H$.

\end{proof}

\subsection{Proof of Property C}

Next we give a proof of Property C.
To begin with, we observe a simple, but useful observation.

\begin{lemma}
\label{lem:basic}
\[ h_{j}^{-1}x_{i} = N(\mX,\mH) \Delta_{H}^{-1} \]
where $N(\mX,\mH)$ represents an $\{\mX,\mH\}$-negative word.
\end{lemma}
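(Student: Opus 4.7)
The plan is to reduce, via the identity $x_i = g_i\Delta_H^{-1}$, to showing that $h_j^{-1}g_i$ is representable as an $\{\mX,\mH\}$-negative word; the lemma then follows immediately from $h_j^{-1}x_i = (h_j^{-1}g_i)\Delta_H^{-1}$.

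I would first treat the case $j=1$. By {\bf [CF(G)]} we have $g_i <_G z_G$, so $z_G^{-1}g_i \in P(<_G)^{-1}$ and admits a $\mG$-negative expression $g_{i_1}^{-1}\cdots g_{i_k}^{-1}$; hence $g_i = z_G \cdot g_{i_1}^{-1}\cdots g_{i_k}^{-1}$. Since $z_H=z_G$ commutes with $h_1$ (apply Lemma \ref{lem:commute} to $h=z_H$ using {\bf [INV(H)]} and the fact from Lemma \ref{lem:minimal} that $h_1$ is the $<_H$-minimal positive element), one has $h_1^{-1}z_H = z_H h_1^{-1} = \Delta_H$, giving
\[ h_1^{-1}g_i = \Delta_H\, g_{i_1}^{-1}\cdots g_{i_k}^{-1}. \]
Now I would substitute $g_l^{-1} = \Delta_H^{-1}x_l^{-1}$ (from the defining relation $g_l = x_l\Delta_H$). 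The crucial cancellation is that the leading $\Delta_H$ absorbs the first $\Delta_H^{-1}$: $\Delta_H g_{i_1}^{-1} = x_{i_1}^{-1}$. Each subsequent $g_{i_l}^{-1}$ becomes $\Delta_H^{-1}x_{i_l}^{-1}$, yielding
\[ h_1^{-1}g_i = x_{i_1}^{-1}\bigl(\Delta_H^{-1}x_{i_2}^{-1}\bigr)\cdots\bigl(\Delta_H^{-1}x_{i_k}^{-1}\bigr). \]
Since $h_1 <_H z_H$ strictly by {\bf [CF(H)]}, Lemma \ref{lem:DeltaH} gives $\Delta_H >_H 1$, so $\Delta_H^{-1}$ rewrites as an $\mH$-negative word and the whole expression becomes $\{\mX,\mH\}$-negative.

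For $j>1$ the plan is to use the factorization $h_j^{-1}g_i = (h_j^{-1}h_1)(h_1^{-1}g_i)$. Since $h_1 <_H h_j$ by the chosen numbering of $\mH$, left-invariance of $<_H$ yields $h_j^{-1}h_1 <_H 1$, so $h_j^{-1}h_1$ is expressible as an $\mH$-negative word; concatenating with the $\{\mX,\mH\}$-negative word for $h_1^{-1}g_i$ from the previous step produces the required negative expression for $h_j^{-1}g_i$. The only subtle point to watch is the telescoping $\Delta_H\cdot\Delta_H^{-1}=1$ at the head of the string in the $j=1$ case: were this cancellation not to occur, a leading $\Delta_H$ (which is $\mH$-positive) would remain and destroy the negativity of the word. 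Everything else is routine manipulation using the commutation of $z_H$ with $h_1$ and with every $g_i$.
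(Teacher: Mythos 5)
Your proof is correct and takes essentially the same approach as the paper's: both reduce to $h_1^{-1}g_i=\Delta_{H}(z_{G}^{-1}g_i)$ via the commutation $h_1^{-1}z_{H}=\Delta_{H}$, rewrite the $\mG$-negative word for $z_{G}^{-1}g_i$ as an $\{\mX,\mH\}$-negative string using $g_l^{-1}=\Delta_{H}^{-1}x_l^{-1}$, and rely on the telescoping cancellation of the leading $\Delta_{H}$. The only cosmetic difference is that the paper inserts $g_1^{-1}g_1$ to peel off an explicit $x_1^{-1}$ factor (invoking Lemma~\ref{lem:minimal} to get $z_{G}^{-1}g_1 g_i\leq_{G}1$), whereas you obtain the needed non-emptiness directly from the strict inequality $g_i<_{G}z_{G}$ in {\bf [CF(G)]}.
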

\begin{proof}
Since $z_{H}=z_{G}$ and $x_{i}=g_{i}\Delta_{H}^{-1}$, we have
\[ z_{H}=  g_{i}g_{i}^{-1}z_{G} g_{1}^{-1}g_{1}=x_{i}\Delta_{H}( g_{i}^{-1} z_{G} g_{1}^{-1}) x_{1} \Delta_{H}.\]
Therefore 
\[ h_{j}^{-1}x_{i} =(h_{j}^{-1} z_{H}\Delta_{H}^{-1}) x_{1}^{-1}(z_{G}^{-1}g_{1}g_{i}) \Delta_{H}^{-1} = (h_{j}^{-1}h_{1})x_{1}^{-1}(z_{G}^{-1}g_{1}g_{i})\Delta_{H}^{-1}.\]
Since $z_{G}^{-1}g_{i} <_{G} 1$ and $g_{1}$ is the $<_{G}$-minimal positive element, $z_{G}^{-1}g_{i} \leq_{G} g_{1}^{-1}$. Hence $z_{G}^{-1}g_{1}g_{i} \leq_{G} 1$. Thus, $(h_{j}^{-1}h_{1})x_{1}^{-1}(z_{G}^{-1}g_{1}g_{i})$ is written as an $\{\mX , \mH\}$-negative word.

\end{proof}
Now we are ready to prove Property $C$.

\begin{proposition}[Property C]
\label{prop:PropertyC}
Each non-trivial element $x \in X$ is expressed by an $\{\mX,\mH\}$-positive word or an $\{\mX,\mH\}$-negative word.
\end{proposition}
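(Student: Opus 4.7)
The plan is to extract the sign of $x$ from a carefully chosen reduced standard factorization $\mF(x)=rp_{1}q_{1}\cdots p_{l}q_{l}$ of the form produced in the proof of Lemma~\ref{lem:stand}, in which one has the strict inequality $q_{i}<_{H}\Delta_{H}$ for every $i\geq 1$ (in addition to $q_{i}>_{H}1$ for $i<l$ and $q_{l}\geq_{H}1$). The argument then bifurcates on the sign of $r$.

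If $r\geq_{H}1$, reading off $\mF(x)$ immediately exhibits $x$ as an $\{\mX,\mH\}$-positive word: $r$ is either trivial or an $\mH$-positive word, each $p_{i}$ is by definition an $\mX$-positive word, and each $q_{i}$ is either trivial (only possible when $i=l$) or $\mH$-positive. The concatenation is nonempty because $x\neq 1$.

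When $r<_{H}1$, the plan is to use Lemma~\ref{lem:basic} to iteratively push the leading $\mH$-negative block rightward through the $\mX$-positive blocks. The technical core is the following auxiliary claim, proved by induction on $|P|$: \emph{if $N$ is an $\{\mX,\mH\}$-negative word ending in a letter of the form $h_{j}^{-1}$ and $P$ is an $\mX$-positive word, then $NP=N'\Delta_{H}^{-1}$ in $X$ for some $\{\mX,\mH\}$-negative word $N'$}. The base case $|P|=1$ is exactly Lemma~\ref{lem:basic}, and the induction uses that $\Delta_{H}^{-1}$, being the inverse of $\Delta_{H}>_{H}1$, is itself an $\mH$-negative word terminating in some $h_{j}^{-1}$, so the hypothesis of the claim is restored after each push. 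Starting from $N=r$ (nonempty $\mH$-negative, hence ending in some $h_{j}^{-1}$) and $P=p_{1}$, this yields $rp_{1}=N_{1}'\Delta_{H}^{-1}$, and then $rp_{1}q_{1}=N_{1}'(\Delta_{H}^{-1}q_{1})$. Since $q_{1}<_{H}\Delta_{H}$, the factor $\Delta_{H}^{-1}q_{1}$ satisfies $\Delta_{H}^{-1}q_{1}<_{H}1$, so it is a nonempty $\mH$-negative word ending in some $h_{j}^{-1}$; thus $rp_{1}q_{1}$ is $\{\mX,\mH\}$-negative ending in $h_{j}^{-1}$, and the same move absorbs $p_{2}, q_{2}, \ldots, p_{l}, q_{l}$ in turn. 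At the terminal step, if $q_{l}=1$ the word simply ends in $\Delta_{H}^{-1}$ from the last application of the auxiliary claim, which is already $\mH$-negative; in either case one recovers an $\{\mX,\mH\}$-negative expression for $x$.

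The obstacle I anticipate is the possibility that some intermediate factor $\Delta_{H}^{-1}q_{i}$ collapses to the identity, i.e., $q_{i}=\Delta_{H}$, which would leave the running word without a guaranteed $h_{j}^{-1}$ at its tail and break the induction. This is precisely why I work with the stronger form of reduced standard factorization supplied by the construction in Lemma~\ref{lem:stand}, which enforces $q_{i}<_{H}\Delta_{H}$ strictly; the bare definition of ``reduced'' permits $q_{i}=\Delta_{H}$ inside an irreducible distinguished subfactorization, and handling that case directly would require a more delicate argument exploiting the irreducibility condition.
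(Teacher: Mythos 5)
Your proposal has a genuine gap, and it is exactly the one you flag at the end and then dismiss: the factorization produced by the proof of Lemma~\ref{lem:stand} does \emph{not} satisfy the strict inequality $q_i<_H\Delta_H$ for every $i\geq 1$. Recall how that proof builds the factorization: each $G$-block is normalized to $f_i^*$ with $1<_G f_i^*<_G z_G$, written as $f_i^*=P_i(\mG)g_{k_i}$, and then $P_i(\mG)$ is \emph{rewritten as an $\{\mX,\mH\}$-positive word}. That rewriting replaces $g_{j_1}g_{j_2}\cdots g_{j_t}$ by $x_{j_1}\Delta_H x_{j_2}\Delta_H\cdots x_{j_t}\Delta_H$, so whenever $P_i(\mG)$ is nonempty the resulting standard factorization contains $q$-entries equal to $\Delta_H$. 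What the proof of Lemma~\ref{lem:stand} actually establishes is that $q_i^*<_H\Delta_H$ for the ``new'' $H$-entries and that the $\Delta_H$'s coming from the $P_i(\mG)$-expansions all lie inside \emph{irreducible} distinguished subfactorizations -- that is the content of ``reduced,'' not the absence of $q_i=\Delta_H$. A concrete obstruction: take $x=g_1^2\Delta_H^{-1}=x_1\Delta_H x_1$ with $g_1^2\notin\mG$ and $g_1^2<_G z_G$; the amalgamated-free-product normal form forces $q_1=\Delta_H$ in any standard factorization of $x$, so a ``stronger form'' with all $q_i<_H\Delta_H$ simply does not exist here.

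Consequently the inductive push via Lemma~\ref{lem:basic} can stall at a $\Delta_H$: after absorbing $p_i$ you arrive at $N'\Delta_H^{-1}q_i$, and when $q_i=\Delta_H$ this collapses to $N'$, leaving no guaranteed $h_j^{-1}$ at the tail to restore the hypothesis of your auxiliary claim. The paper's proof handles precisely this case separately: when $q_1=\Delta_H$, it isolates the maximal distinguished subfactorization $w$ starting at $q_1$, and uses the \emph{irreducibility} of $w$ to choose $g_a$ and $x_u$ with $g_a g_w g_u<_G z_G$, so that $h_1^{-1}[g_ag_wg_u]=\Delta_H(z_G^{-1}g_ag_wg_u)$ with $z_G^{-1}g_ag_wg_u<_G 1$; expressing that $\mG$-negative element as an $\{\mX,\mH\}$-negative word of the form $\Delta_H^{-1}N(\mX,\mH)$ then lets the induction on complexity continue. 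Your branch for $r\geq_H 1$ and your branch for $q_1\neq\Delta_H$ (where $\Delta_H^{-1}q_1<_H1$) coincide with the paper's, but the $q_1=\Delta_H$ branch -- the one that actually uses irreducibility, i.e.\ the core of the ``reduced'' notion -- is missing, and the route you propose to avoid it is not available.
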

\begin{proof}
Let $x$ be a non-trivial element of $X$ and take a reduced standard factorization of $x$,
\[ \mF(x)= rp_{1}q_{1}\cdots p_{l}q_{l}. \]
If $r \geq_{H} 1$, $r$ can be written as an $\mH$-positive or empty word, hence we may express $x$ as an $\{\mX,\mH\}$-positive word.

By induction on $l=c(\mF)$, we prove that $x$ is expressed by an $\{\mX,\mH\}$-negative word under the assumption that $r<_{H}1$.

First assume that $q_{1} \neq \Delta_{H}$.
Since $r <_{H} 1$, we can express $r$ as $r = N(\mH) h_{1}^{-1}$, where $N(\mH)$ is an $\mH$-negative word or an empty word.
Take an $\mX$-positive word expression of $p_{1} = x_{i_{1}}x_{i_{2}}\cdots x_{i_{p}}$.
Then by Lemma \ref{lem:basic},
\begin{eqnarray*}
rp_{1} p_{2}q_{2} \cdots & = &  (N(\mH) h_{1}^{-1}) (x_{i_{1}}x_{i_{2}}\cdots x_{i_{p}})  q_{1} p_{2}q_{2} \cdots\\
& = & N(\mH) (h_{1}^{-1} x_{i_{1}})x_{i_{2}}\cdots x_{i_{p}} q_{1} p_{2}q_{2} \cdots \\
 		& = & N(\mX,\mH)\Delta_{H}^{-1}x_{i_{2}}\cdots x_{i_{p}} q_{1} p_{2}q_{2} \cdots \\
 		& = & N(\mX,\mH) (h_{1}^{-1} x_{i_{2}})\cdots x_{i_{p}} q_{1} p_{2}q_{2} \cdots \\
		& =  & \cdots \\
		& = & N(\mX,\mH) \Delta_{H}^{-1} q_{1} p_{2}q_{2} \cdots.
\end{eqnarray*}
 $N(\mX,\mH)$ represents an $\{\mX,\mH\}$-negative word.
 
Since $\mF(x)$ is a reduced standard factorization, $q_{1} <_{H} z_{H}$. By Lemma \ref{lem:DeltaH} $\Delta_{H}$ is the $<_{H}$-maximal element of $H$ which is strictly smaller than $z_{H}$, so $q_{1} \leq_{H} \Delta_{H}$. We have assumed that $q_{1} \neq \Delta_{H}$ so $(\Delta_{H}^{-1}q_{1}) <_{H} 1$. Thus the subword $ (\Delta_{H}^{-1}q_{1})  p_{2}q_{2} \cdots  p_{l}q_{l}$ is a reduced standard factorization with complexity $(l-1)$. By induction, $ (\Delta_{H}^{-1}q_{1})  p_{2}q_{2} \cdots  p_{l}q_{l}$ is written as an $\{\mX,\mH\}$-negative word, hence we conclude that $x$ is written as an $\{\mX,\mH\}$-negative word.

Next assume that $q_{1}=\Delta_{H}$. Let $w= q_{1}p_{2}q_{2}\cdots p_{s-1}q_{s-1}$ be a maximal distinguished subfactorization of $\mF(x)$ which contains $q_{1}$. Thus, the reduced standard factorization $S$ is written as
\[ \mF(x) = r p'_{1} [g_{a} g_{w}] x_{u} p'_{s}q_{s}p_{s+1} \cdots p_{l}q_{l} \]
where $p'_{1}=p_{1}x_{a}^{-1}, p'_{s} = x_{u}^{-1}p_{s} \in PX \cup \{ 1 \}$.

Then by Lemma \ref{lem:basic},
\begin{eqnarray*}
x & = &   r p'_{1} [g_{a}g_{w}] x_{u} p'_{s}q_{s}p_{s+1} \cdots p_{l}q_{l} \\
 & = & N(\mX,\mH) h_{1}^{-1}[g_{a}g_{w}]x_{u}\Delta_{H} \Delta_{H}^{-1}p'_{s}q_{s} \cdots p_{l}q_{l}\\
 & = & N(\mX,\mH) h_{1}^{-1}[g_{a}g_{w}g_{u}] \Delta_{H}^{-1}p'_{s}q_{s} \cdots p_{l}q_{l}\\
 & = & N(\mX,\mH) \Delta_{H}(z_{G}^{-1}g_{a}g_{w}g_{u}) \Delta_{H}^{-1}p'_{s}q_{s} \cdots p_{l}q_{l}\\
\end{eqnarray*}

The distinguished subfactorization $w$ is irreducible so we may choose $x_{u}$ and $g_{a}$ so that $z_{G}^{-1}g_{a}g_{w}g_{u} <_{G} 1$ holds.
This implies that $z_{G}^{-1}g_{a}g_{w}g_{u}$ is written as a $\mG$-negative word. By expressing a $\mG$-negative word expression of $z_{G}^{-1}g_{a}g_{w}g_{u}$ as an $\{\mX,\mH\}$-negative word, we conclude that $z_{G}^{-1}g_{a}g_{w}g_{u}$ is written as a word of the form $\Delta_{H}^{-1} N(\mX,\mH)$. Hence
\begin{eqnarray*} 
x & = & N(\mX,\mH) \Delta_{H} \Delta_{H}^{-1} N(\mX,\mH) (\Delta_{H}^{-1}p'_{s} q_{s} \cdots p_{l}q_{l})\\
 & = & N(\mX,\mH) (\Delta_{H}^{-1}p'_{s} q_{s} \cdots p_{l}q_{l}).
\end{eqnarray*}

If $p'_{s}\neq 1$, then $(\Delta_{H}^{-1}p'_{s}q_{s}\cdots p_{l}q_{l})$ is a reduced standard factorization having the complexity less than $l$. Hence by induction, $(\Delta_{H}^{-1}p'_{s}q_{s}\cdots p_{l}q_{l})$ is expressed by an $\{\mX, \mH\}$-negative word.

If $p'_{s}=1$, then $q_{s} \neq \Delta_{H}$ since $w$ was a maximal distinguished subfactorization. Hence $q_{s} <_{H} \Delta_{H}$, and $(\Delta_{H}^{-1}q_{s})p_{s+1} \cdots p_{l}q_{l}$ is a reduced standard factorization with complexity less than $l$. By induction, $(\Delta_{H}^{-1}q_{s})p_{s+1} \cdots p_{l}q_{l}$ is expressed by an $\{\mX, \mH\}$-negative word. 

Thus in either case, we conclude $x$ is expressed by an $\{\mX,\mH\}$-negative word.
\end{proof}

\subsection{Proof of Theorem \ref{thm:main}}
Now we are ready to prove our main theorem.

\begin{proof}[of Theorem \ref{thm:main}]

(i): In Proposition \ref{prop:PropertyA} and Proposition \ref{prop:PropertyC}, we have already confirmed the Properties $A$ and $C$ for the generating set $\{\mX ,\mH\}$. By Corollary \ref{cor:iso} the generating set $\{\mX ,\mH\}$ indeed defines an isolated left ordering $<_{X}$ of $X$.\\

(ii):
Let $\mG' = \{g'_{1}, \ldots \}$ and $\mH' = \{h'_{1}, \ldots \}$ be other generating sets of $G$ and $H$ satisfying {\bf[CF(G)]} and {\bf [CF(H)]}.
Recall that $\Delta_{H} = z_{H}h_{1}^{-1}$ does not depend on the choice of a generating set $\mH$. 
Let $x_{i}= g_{i} \Delta_{H}^{-1}$, $x'_{i} = g'_{i}\Delta_{H}^{-1}$, $\mX = \{x_{1},\ldots,\}$, and  $\mX' =\{x'_{1},\ldots,\}$.

Since $\mH$ and $\mH'$ are generators of the same semigroup, we may write $h_{i}$ as an $\mH'$-positive word. Similarly, since $\mG$ and $\mG'$ are generators of the same semigroup, we may write $g_{i}$ as a $\mG'$-positive word $g_{i} = g'_{i_{1}} g'_{i_{2}} \cdots g'_{i_{l}}$.
Thus,
\[ x_{i} =  g_{i} \Delta_{H}^{-1} = g'_{i_{1}} g'_{i_{2}} \cdots g'_{i_{l}}\Delta_{H}^{-1} = x'_{i_{1}} \Delta_{H} x'_{i_{2}} \Delta_{H} \cdots x'_{i_{l-1}}\Delta_{H} x'_{i_{l}}\]
so $x_{i}$ is written as an $\{\mX',\mH'\}$-positive word.
Thus, if $x \in X$ is expressed by an $\{\mX ,\mH\}$-positive word, then $x$ is also represented by an $\{\mX',\mH'\}$-positive word. By interchanging the roles of $\{\mG, \mH\}$ and $\{ \mG',\mH'\}$, we conclude that $\{\mX ,\mH\}$ and $\{\mX',\mH'\}$ generates the same sub-semigroup of $X$ so they define the same isolated ordering of $X$. \\

(iii): This is obvious from the definition of $<_{X}$. \\

(iv): The inequality $h_{1}<_{X} h_{2} <_{X}\cdots <_{X} h_{n}$ follows from the definition of $<_{X}$. By Lemma \ref{lem:basic}, $x_{i} <_{X} h_{1}$ for all $i$. Now we show $x_{i}<_{X} x_{j}$ if $i<j$. Since $g_{i}<_{G}g_{j}$ if $i<j$, $g_{i}^{-1}g_{j}$ is written as a $\mG$-positive word. Now by definition $g_{i} = x_{i}\Delta_{H}$, so we may express a $\mG$-positive word expression of $g_{i}^{-1}g_{j}$ as an $\{\mX,\mH\}$-positive word expression of the form $P_{i,j}(\mX,\mH) \Delta_{H}$, where $P_{i,j}(\mX,\mH)$ represents an $\{\mX,\mH\}$-positive word. Therefore
 $x_{i}^{-1}x_{j}= \Delta_{H}g_{i}^{-1}g_{j}\Delta_{H}^{-1} = \Delta_{H} P_{i,j}(\mX,\mH)$, so $x_{i} <_{X} x_{j}$. The assertion that $z=z_{G}=z_{H}$ is $<_{X}$-positive cofinal is obvious. 
To see that $<_{X}$ is a $z$-right invariant ordering, we observe that $z^{-1}x_{i}z = x_{i} >_{X} 1$ and $z^{-1} h_{j} z >_{X} 1$. 
Now for $x,x' \in X$, assume $x <_{X} x'$, so $x^{-1}x'$ is written as $\{\mX,\mH\}$-positive word $w=s_{1}\cdots s_{m}$, where $s_{i}$ denotes $x_{j}$ or $h_{j}$.
Then $z^{-1} (x^{-1} x') z = (z^{-1} s_{1} z) \cdots (z^{-1} s_{m} z) >_{X} 1$, hence $xz <_{X} x'z$.\\

(v): Recall that by Lemma \ref{lem:cofinal}, we may choose the generating sets $\mG$ and $\mH$ so that the cardinal of $\mG$, $\mH$ are equal to $r(<_{G})$, $r(<_{H})$ respectively. Thus, $r(<_{X}) \leq r(<_{G}) +r(<_{H})$. \\

(vi): We prove that $\langle x_{1} \rangle$ is the unique $<_{X}$-convex non-trivial proper subgroup of $X$. Recall by (2), (4) and Lemma \ref{lem:minimal}, $x_{1}$ is the minimal $<_{X}$-positive element of $X$, hence $x_{1}$ does not depend on a choice of $\mG$ and $\mH$. In particular, $\langle x_{1} \rangle$ is a non-trivial $<_{X}$-convex subgroup.

Let $C$ be a $<_{X}$-convex subgroup of $X$.
Assume that $C \supset \langle x_{1} \rangle $.
Let $y \in C - \langle x_{1} \rangle $ be an $<_{X}$-positive element.
Then $y$ is written as  $y = x_{1}^{m} x_{j} P(\mX,\mH)$ or $y=x_{1}^{m} h_{l}P(\mX,\mH)$ where $m\geq 0$, $l>0$, $j>1$ and $P(\mX,\mH)$ is an $\{\mX,\mH\}$-positive word. Since $x_{1} \in C$, we may choose $y$ so that $m=0$ by considering $x_{1}^{-m}y$ instead.

First we consider the case $\mX \not \subset \langle x_{1} \rangle$. Then we may choose $y$ so that $1< x_{2} \leq_{X} y$ holds, so the convexity assumption implies $x_{2} \in C$.
Now observe that $x_{1}^{-1}x_{2} = \Delta_{H}g_{1}^{-1}g_{2}\Delta_{H}^{-1} = \Delta_{H} P_{1,2}(\mX,\mH)$, hence
\[ 1<_{X} h_{p} \leq z_{H}h_{1}^{-1} =\Delta_{H} <_{X}\Delta_{H} P_{1,2}(\mX,\mH) = x_{1}^{-1}x_{2}. \] 
Since $x_{1}^{-1}x_{2} \in C$, this implies $\mX \cup \mH = \{x_{1},\ldots, x_{m},h_{1},\ldots,h_{n}\} \subset C$.
Therefore we conclude $C = X$.

Next we consider the case  $\mX  \subset \langle x_{1} \rangle$. This happens only when $G=\Z = \langle g_{1} \rangle$ and $z_{G}= g_{1}^{N}$. Then we may choose $y$ so that $1< h_{1} \leq_{X} y$ holds, so $h_{1} \in C$. Then $x_{1}^{-1}h_{1} = \Delta_{H}g_{1}^{-1}h_{1} = h_{1}^{-1}z_{G} g_{1}^{-1}h_{1}$ so $z_{G}g_{1}^{-1} = g_{1}^{N-1} \in C$. This implies $z_{G}=z_{H} \in C$, so $C=X$.

\end{proof}

\subsection{Computational issues}
\label{sec:comp}

In this section we briefly mention the computational issue concerning the isolated ordering $<_{X}$. 
Let $G = \langle \mS \: | \:\mathcal{R} \rangle$ be a group presentation and $<_{G}$ be a left ordering of $G$. The {\em order-decision problem} for $<_{G}$ is the algorithmic problem of deciding for an element $g \in G$ given as a word on $\mS \cup \mS^{-1}$, whether $1 <_{G} g$ holds or not. 
Clearly, the order-decision problem is harder than the word problem, since $1 <_{G} g$ implies $1 \neq g$.
It is interesting to find an example of a left ordering $<_{G}$ of a group $G$, such that the order-decision problem for $<_{G}$ is unsolvable but the word problem for $G$ is solvable.

There is another algorithmic problem which is also related to the order-decision problem of isolated orderings.
We say a word on $\mG \cup \mG^{-1}$ is {\em $\mG$-definite} if $w$ is $\mG$-positive or $\mG$-negative, or empty. If $\mG$ defines an isolated ordering of $G$, then every $g \in G$ admits a $\mG$-definite word expression. The {\em $\mG$-definite search problem} is a problem to find a $\mG$-definite word expression of a given element of $G$.

 \begin{theorem}
 \label{thm:orderdecision}
 Let us take $G,H,X, <_{G},<_{H}, z_{G},z_{H},\mG,\mH,\mX$ as in Theorem \ref{thm:main}. 

\begin{enumerate}
\item The order-decision problem for $<_{X}$ is solvable if and only if the order-decision problems for $<_{G}$ and $<_{H}$ are solvable.
\item The $\{\mX,\mH\}$-definite search problem is solvable if and only if the $\mG$-definite search problem and the $\mH$-search problem are solvable.
\end{enumerate}
\end{theorem}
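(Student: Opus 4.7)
My plan is to deduce both statements by showing that the reduced standard factorization of Lemma \ref{lem:stand} can be computed effectively under each set of hypotheses, and then reading off the $<_X$-sign, or an $\{\mX,\mH\}$-definite expression, from the leading factor $r$ in the manner of Propositions \ref{prop:PropertyA} and \ref{prop:PropertyC}. The necessity directions follow immediately from the order-preserving inclusions $\iota_G : G \hookrightarrow X$ and $\iota_H : H \hookrightarrow X$ of Theorem \ref{thm:main} (iii). For (1), any word over $\mG^{\pm 1}$ representing $g \in G$ can be re-read as a word over the generators of $X$, and $1 <_G g$ holds if and only if $1 <_X \iota_G(g)$, so an oracle for $<_X$ decides $<_G$, and symmetrically for $<_H$. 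For (2), solvability of $\{\mX,\mH\}$-definite search yields the word problem in $X$ (an element is trivial iff its definite expression is empty), hence the word problem in $G$ and $H$ by restriction along the inclusions; a na\"ive enumeration of $\mG$-definite (resp.\ $\mH$-definite) candidate words then produces the required expression, since the hypothesis that $\mG$ (resp.\ $\mH$) defines an isolated ordering guarantees that such an expression exists for every non-trivial element.

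For the sufficiency direction of (1), I would first observe that order-decision in $<_G$ together with positive cofinality of $z_G$ yields both the word problem in $G$ and the membership problem for $\langle z_G \rangle$ in $G$: for the latter, iteratively locate the unique $N$ with $z_G^{N} \leq_G g <_G z_G^{N+1}$ and then test $g = z_G^{N}$ via the word problem; the same holds for $H$. Given $x \in X$, these ingredients make the amalgamated normal form $q_0 f_1 q_1 \cdots f_l q_l$ algorithmically computable, and the construction inside Lemma \ref{lem:stand} can then be executed step by step: the auxiliary exponents $N_i$, $M_i$, $L_i$ are produced from cofinality together with order-decision, and the rewriting of each $f_i^*$ as a $\mG$-positive word requires only word-problem enumeration. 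The resulting reduced standard factorization $\mF(x) = r p_1 q_1 \cdots p_l q_l$ then has the property, established inside the proof of Proposition \ref{prop:PropertyC}, that $x >_X 1$ if and only if $r \geq_H 1$ and $x \neq 1$, and this last test reduces to order-decision in $<_H$ together with the word problem in $X$ already obtained.

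For the sufficiency direction of (2), the oracles for $\mG$-definite and $\mH$-definite search subsume order-decision and the word problem in $G$ and $H$, so everything above remains effective and produces a reduced standard factorization of any input $x \in X$. To convert this into an $\{\mX,\mH\}$-definite word I would follow the constructive algorithms of Propositions \ref{prop:PropertyA} and \ref{prop:PropertyC}: when $r \geq_H 1$, rewrite $r$ and each $q_i$ as $\mH$-positive words using $\mH$-definite search while keeping the $\mX$-positive expressions of the $p_i$ unchanged; when $r <_H 1$, iterate the rewriting of Proposition \ref{prop:PropertyC}, which at each maximal distinguished subfactorization requires a $\mG$-negative expression of $z_G^{-1} g_a g_w g_u$, produced by $\mG$-definite search because irreducibility of the subfactorization forces this element to be $<_G$-negative.

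The main obstacle is not conceptual but organisational: one must verify, step by step, that every auxiliary choice appearing in Lemmas \ref{lem:stand} and \ref{lem:reducing} and in the proofs of Propositions \ref{prop:PropertyA} and \ref{prop:PropertyC} (the cofinality exponents, the specific $x_a$ and $x_u$ used to extract a distinguished subfactorization, the $\mG$-positive representatives of the $f_i^*$, and the $\mG$-negative expressions arising in the Property C rewriting) is genuinely Turing-computable from only the two given oracles. No individual step involves a deep computability argument, but collecting and arranging them is where the bulk of the work of the proof will lie.
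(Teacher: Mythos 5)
Your proof follows the same overall strategy as the paper: show that the reduced standard factorization $\mF(x)=rp_1q_1\cdots p_lq_l$ can be computed effectively from the two given oracles, and then read off the sign of $x$ (resp.\ an $\{\mX,\mH\}$-definite expression) from the head factor $r$ exactly as in the proofs of Propositions \ref{prop:PropertyA} and \ref{prop:PropertyC}. The one place you genuinely diverge is the necessity direction of (2): the paper sketches a direct transformation of an $\{\mX,\mH\}$-positive word representing an element of $G$ (resp.\ $H$) into a $\mG$-positive (resp.\ $\mH$-positive) word ``by using $g_i=x_i\Delta_H$'', whereas you first extract the word problems of $X$, $G$ and $H$ from the definite-search oracle and then produce the required definite expression by brute-force enumeration of candidate words. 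Your route is cleaner here, since the substitution indicated by the paper goes the wrong way for this purpose (it readily converts $\mG$-positive to $\{\mX,\mH\}$-positive, not the reverse) and is not obviously algorithmic without some such auxiliary argument. The additional explicit detail you supply on the sufficiency side --- deriving the word problem in $G$ and $H$ and decidability of membership in $\langle z_G\rangle$, $\langle z_H\rangle$ from order-decision plus cofinality, which is what makes the amalgamated normal form and hence the construction of Lemma \ref{lem:stand} effective --- is exactly what the paper compresses into the phrase ``both proofs are constructive''; spelling it out as you do makes the argument more complete without changing its substance.
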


\begin{proof}
Since the restriction of $<_{X}$ to $G$ and $H$ yields the ordering $<_{G}$ and $<_{H}$ respectively, if the order-decision problem for $<_{X}$ is solvable, then so is for $<_{G}$ and $<_{H}$. Similarly, if $\{\mX,\mH\}$-definite search problem is solvable, then we are able to get $\mG$-positive (resp. $\mH$-positive) word by transforming $\{\mX,\mH\}$-positive word representing elements of $G$ (resp. $H$) by using $g_{i} =x_{i}\Delta_{H}$ so $\mG$-definite ($\mH$-definite) search problem is also solvable.

The proof of converse is implicit in the proof of Theorem \ref{thm:main} (i).
Recall that in the proof of Property C (Proposition \ref{prop:PropertyC}), we have shown that for a reduced standard factorization $\mF(x)= rp_{1}q_{1}\cdots p_{l}q_{l}$, $x>_{X} 1$ if $r \geq_{H}1$ and $x<_{X} 1$ if $r_{H}<1$. Moreover, the proof of Property C (Proposition \ref{prop:PropertyC}) is constructive, hence we can algorithmically compute an $\{ \mX,\mH\}$-negative word expression of $x$ if $r<_{H} 1$ if the $\mG$-definite search problem and the $\mH$-search problem is solvable.

Thus, to solve the order-decision problem or $\{ \mX,\mH\}$-definite search problem, it is sufficient to compute a reduced standard factorization.
We have established two different methods to compute a reduced standard factorization, in the proof of Lemma \ref{lem:stand} and Lemma \ref{lem:reducing}. Both proofs are constructive, hence we can algorithmically compute a reduced standard expression.
\end{proof} 

It is not difficult to analyze the computational complexity of order-decision problem or the $\{ \mX,\mH\}$-definite search problems based on the algorithm obtained from the proof of Proposition \ref{prop:PropertyC}, Lemma \ref{lem:stand} and Lemma \ref{lem:reducing}. In particular, we observe the following results.

\begin{proposition}
 Let us take $G,H,X, <_{G},<_{H}, z_{G},z_{H},\mG,\mH,\mX$ as in Theorem \ref{thm:main}. 
\begin{enumerate}
\item If the order-decision problems for $<_{G}$ and $<_{H}$ are solvable in polynomial time with respect to the length of the input of words, then the order-decision problem for $<_{X}$ is also solvable in polynomial time.
\item If the $\mG$-definite search problem and the $\mH$-definite search problem are solvable in polynomial time, then the $\{\mX,\mH\}$-definite search problem is also solvable in polynomial time. 
\item Moreover, if one can always find a $\mG$-definite and an $\mH$-definite word expression whose length are polynomial with respect to the length of the input word, then one can always find an $\{\mX,\mH\}$-definite word expression whose length is polynomial with respect to the length of the input word.
\end{enumerate}
\end{proposition}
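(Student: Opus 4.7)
The plan is to perform a complexity analysis of the algorithm implicit in the proof of Theorem~\ref{thm:orderdecision}. Given an input word $w$ of length $n$ on $\{\mX,\mH\}^{\pm 1}$, the algorithm proceeds in three stages: (a) build a pre-reduced standard factorization $\mF = rp_1 q_1 \cdots p_l q_l$ of $[w]$; (b) iteratively apply the reducing operation of Lemma~\ref{lem:reducing} until $\mF$ is reduced; (c) either read off the sign of $[w]$ as the sign of $r$ under $<_H$, or apply the rewriting of Proposition~\ref{prop:PropertyC} to output an $\{\mX,\mH\}$-definite word.

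Stage~(a) expands each $x_i^{-1}$ as $z_H^{-1}\Delta_H P_i(\mX,\mH)$, commutes the central $z_H$'s past the $x_i$'s via Lemma~\ref{lem:com2}, and adjusts each $q_i$ to lie in $(1,z_H)$ as in Lemma~\ref{lem:pre-reduced}. This runs in polynomial time, requires polynomially many calls to the order-decision and definite-search subroutines for $H$, and yields a pre-reduced factorization with $c(\mF)=O(n)$ and total length $O(n)$. Stage~(b) is the main loop: each iteration strictly decreases $(d(\mF),c(\mF))$ in the lexicographic order, and both quantities are $O(n)$ throughout (since $c$ never increases and $d \leq c$), so at most $O(n^2)$ iterations suffice. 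Each iteration identifies a reducible maximal distinguished subfactorization by testing $g_a g_w g_u \geq_G z_G$ over the polynomially many admissible pairs $(x_a,x_u)$, using the order-decision algorithm for $<_G$, and then (for parts~(2) and~(3)) rewrites $z_G^{-N} g_a g_w g_u$ as a $\mG$-positive word using the $\mG$-definite search.

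Stage~(c) is polynomial-time assuming polynomial-time subroutines: for part~(1), it suffices to compare $r$ with $1$ under $<_H$, giving a polynomial-time order-decision algorithm for $<_X$; for parts~(2) and~(3), the proof of Proposition~\ref{prop:PropertyC} provides a direct constructive procedure that uses Lemma~\ref{lem:basic} linearly many times and invokes the $\mG$- and $\mH$-definite searches polynomially often, producing an explicit $\{\mX,\mH\}$-definite word.

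The main obstacle is controlling word-length growth across the $O(n^2)$ iterations of stage~(b): naively, each reducing operation could replace a polynomially-long subword by another polynomially-long subword, leading to an exponential blowup in the total length. The key observation is that the integer $N$ appearing in $z_G^{-N} g_a g_w g_u$ is $O(n)$ throughout (since it is bounded by the original complexity $c(\mF)$ and the maximum $\mG$-length of any single factor), and that the inserted $\mG$-definite word has length polynomial in the length of its input under hypothesis~(3); combined with the lex-decrease of $(d(\mF),c(\mF))$, an amortized analysis gives a polynomial bound on intermediate factorization lengths, hence on the final output length. Verifying this amortized estimate cleanly is the only genuinely delicate point; everything else is routine bookkeeping atop Lemmas~\ref{lem:pre-reduced} and~\ref{lem:reducing}.
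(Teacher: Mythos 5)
Your stage~(b) contains a genuine gap. You assert that ``$c$ never increases'' under the reducing operation of Lemma~\ref{lem:reducing}, but this is false: in the branch where $g' = (z_{G}^{-N}g_{a}g_{w}g_{u})g_{1}^{-1}>_{G}1$, the operation inserts a new distinguished subfactorization $[g_{a'}P(\mG)]$ whose number of $pq$-blocks is the $\mG$-length of $P(\mG)$. Under hypothesis~(3) that length is bounded only \emph{polynomially} in the $\mG$-length of the removed word $z_{G}^{-N}g_{a}g_{w}g_{u}g_{1}^{-1}$, which is roughly the length $s-i$ of the excised distinguished subfactorization, so a single $d$-decreasing step can make $c$ jump from $l$ to $l + \mathrm{poly}(s-i) - (s-i)$. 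Lemma~\ref{lem:reducing} guarantees lexicographic decrease of $(d(\mF),c(\mF))$, but since the $d$-decreasing steps may blow up $c$ and there can be up to $O(n)$ of them, neither the $O(n^{2})$ iteration count nor the intermediate word-length bound follows without the amortized estimate you defer. This is not routine bookkeeping: your argument as stated relies on a monotonicity of $c$ that does not hold, and the compensating amortization is exactly the part you leave unverified.

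The fix is to avoid the iteration altogether and compute a reduced standard factorization in one pass via Lemma~\ref{lem:stand} instead of Lemma~\ref{lem:reducing}. Rewrite the input $\{\mX,\mH\}^{\pm1}$-word of length $n$ as a $\{\mG,\mH\}^{\pm1}$-word of length $O(n)$ using $x_{i}=g_{i}\Delta_{H}^{-1}$, pass to the amalgamated-free-product normal form $q_{0}f_{1}q_{1}\cdots f_{l}q_{l}$ with $l=O(n)$ and each syllable of length $O(n)$ (testing whether a syllable lies in $\langle z_{G}\rangle = \langle z_{H}\rangle$ uses polynomially many order-decision calls under hypothesis~(1)), determine each $N_{i}$ and $M_{i}$ (they are $O(\mathrm{poly}(n))$ since every generator is $<$-bounded by $z_{G}$ resp.\ $z_{H}$ and the $\mG$/$\mH$-definite expressions have polynomial length under hypothesis~(3)), and assemble the reduced standard factorization exactly as in the proof of Lemma~\ref{lem:stand}. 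This gives polynomial time and polynomial output size without any reducing loop; the constructive argument in Proposition~\ref{prop:PropertyC} then extracts a polynomially long $\{\mX,\mH\}$-definite word, completing parts~(1)--(3). Your stages~(a) and~(c) are otherwise fine.
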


\section{Examples}
\label{sec:exam}

In this section we give examples of isolated left orderings produced by Theorem \ref{thm:main}.
All examples in this section are new, and have various properties which previously known isolated orderings do not have. For the sake of simplicity, in the following examples we only use the infinite cyclic group $\Z$, the most fundamental example of group having isolated orderings, as the basic building blocks. 

Other group with isolated orderings, such as groups having only finitely many left-orderings, or the braid group $B_{n}$ with the Dubrovina-Dubrovin ordering $<_{DD}$, also can be used to construct new examples of isolated orderings. 

\subsection{Group having many distinct isolated orderings}

Let $a_{1},\ldots,a_{m}$ $(m>1)$ be positive integers bigger than one and consider the group obtained as a central cyclic amalgamated free product of $m$ infinite cyclic groups $\Z^{(i)} = \langle x_{i} \rangle$.
\begin{eqnarray*} 
G = G_{a_{1},\ldots, a_{m}} & = & *_{\Z} \Z^{(i)} \\
 & = & \left\langle x_{1},\ldots, x_{m}\: | \: x_{1}^{a_{1}}=x_{2}^{a_{2}}= \cdots =x_{m}^{a_{m}} \right\rangle 
\end{eqnarray*}

Recall that an infinite cycle group $\Z$ have exactly two left orderings, the standard one and its opposite. Using the standard left ordering for each factor $\Z^{(i)}$, by Theorem \ref{thm:main} we are able to construct an isolated left ordering $<_{G}$ so that the restriction of $<_{G}$ to the $i$-th factor $\Z^{(i)}$ is the standard left ordering.

First we give a detailed exposition of $<_{G}$ for the case $m=2$ and $m=3$.

\begin{example}
\label{exam:ZZZ}
{}$\;$\\
\begin{enumerate}
\item[(i)] First we begin with the case $m=2$, which was already considered in \cite{i1},\cite{n2}:
\[ G_{a_{1},a_{2}}= \Z^{(1)}*_{\Z}\Z^{(2)}= \langle x_{1},x_{2} \: | \: x_{1}^{a_{1}}=x_{2}^{a_{2}}\rangle \]

By Theorem \ref{thm:main}, we get an isolated ordering $<_{G}$ defined by the generating set $\{ x_{1}x_{2}^{1-a_{2}}, x_{2} \}$.

\item[(ii)] Next we consider the case $m=3$. There are two different ways to express $G$ as an amalgamated free products of $\Z$.

\begin{enumerate}
\item First we regard $G_{a_{1},a_{2},a_{3}} = G_{a_{1},a_{2}}*_{\Z} \Z^{(3)} = (\Z^{(1)}*_{\Z}\Z^{(2)})*_{\Z}\Z^{(3)}$. 

By (1), $G_{a_{1},a_{2}}$ have an isolated ordering defined by $\{ x_{1}x_{2}^{1-a_{2}}, x_{2} \}$. By applying Theorem \ref{thm:main} again, we get the isolated ordering $<_{(\bullet \bullet )\bullet }$ defined by $\{ x_{1}x_{2}^{1-a_{2}}x_{3}^{1-a_{3}}, x_{2}x_{3}^{1-a_{3}},x_{3}\}$.

\item Next we regard $G_{a_{1},a_{2},a_{3}} = \Z^{(1)}* G_{a_{2},a_{3}} = \Z^{(1)}*(\Z^{(2)}*_{\Z}\Z^{(3)})$. By applying Theorem \ref{thm:main}, we get the isolated ordering $<_{\bullet(\bullet\bullet)}$ defined by
$\{x_{1}x_{3}^{-a_{3}}x_{2}x_{3}^{1-a_{3}} = x_{1}x_{2}^{1-a_{2}}x_{3}^{1-a_{3}}, x_{2}x_{3}^{1-a_{3}},x_{3}\}$.\\
\end{enumerate}

Thus two orderings $<_{(\bullet \bullet )\bullet}$ and  $<_{\bullet(\bullet\bullet)}$ derived from different factorizations are the same ordering.
\end{enumerate}
\end{example}

As Example \ref{exam:ZZZ} (ii) suggests, the isolated orderings constructed from Theorem \ref{thm:main} are independent of the way of factorization as amalgamated free products, that is, the way of putting parenthesis in the expression $\Z^{(1)}*_{\Z} \Z^{(2)} *_{\Z} \cdots *_{\Z} Z^{(m)}$. All factorizations give the same isolated ordering $<_{G}$ defined by $\{s_{1},\ldots,s_{m}\}$, where $s_{i}$ is given by
\[ s_{i} = x_{i}x_{i+1}^{1-a_{i+1}} \cdots x_{m}^{1-a_{m}}. \]

This is checked by induction on $m$. Take a factorization of $G$ as
$G= G_1*_{\Z}G_2 = G_{a_{1},\ldots,a_{k}} *_{\Z} G_{a_{k+1},\ldots,a_{m}}$. By induction, the isolated ordering $<_{1}$ of $G_{1}$ is independent of a choice of a factorization of $G_{1}$, and is defined by
\[ s'_{i}=  x_{i}x_{i+1}^{1-a_{i+1}} \cdots x_{k}^{1-a_{k}} \;\;\;(i=1,\ldots, k). \]
Similarly, the isolated ordering $<_{2}$ of $G_{2}$ is independent of a choice of a factorization of $G_{2}$, and is defined by 
\[ s''_{j} = x_{j}x_{j+1}^{1-a_{j+1}} \cdots x_{m}^{1-a_{m}} \;\;\;(j=k+1,\ldots, m). \]
Thus by Theorem \ref{thm:main}, we get an isolated ordering $<_{G}$ of $G$ defined by 
\begin{eqnarray*}
 s_{i}&  =  &\left\{ \begin{array}{ll}
 s'_{i} x_{k+1}^{-a_{k+1}} x_{k+1}x_{k+2}^{1-a_{j+2}} \cdots x_{m}^{1-a_{m}} & (i=1,\ldots, k), \\
  x_{i}x_{i+1}^{1-a_{i+1}} \cdots x_{m}^{1-a_{m}} & (i=k+1,\ldots ,m)
  \end{array}
  \right. \\
  & = & x_{i}x_{i+1}^{1-a_{i+1}} \cdots x_{m}^{1-a_{m}} .
\end{eqnarray*}

The group $G$ is the simplest example of groups with isolated orderings constructed by Theorem \ref{thm:main}. Nevertheless the group $G$ and its isolated ordering $<_{G}$ have various interesting properties which have not appeared in the previous examples:\\

{\em (1): The isolated ordering $<_{G}$ of $G$ is not derived from  Dehornoy-like orderings if $G$ is not generated by two elements.}\\

As we mentioned earlier, the special kind of left-orderings called {\em Dehornoy-like orderings} produces isolated orderings, and all previously known examples of genuine isolated orderings are derived from Dehornoy-like orderings. 

In \cite{i1} it is proved that an isolated ordering derived from Dehornoy-like orderings has a lot of convex subgroups: if the isolated orderings $<_{H}$ of a group $H$ is derived from the Dehornoy-like orderings, then there are at least $r(<_{H})-1$ proper, $<_{H}$-convex nontrivial subgroups. On the other hand Theorem \ref{thm:main} (vi) shows the isolated orderings $<_{G}$ has only one proper, $<_{G}$-convex nontrivial subgroup.

If $G$ is not generated by two elements, then $r(<_{G})>2$. 
 This implies that the isolated ordering $<_{G}$ of $G$ is not derived from a Dehornoy-like ordering. This provides a counter example of somewhat optimistic conjecture: every genuine isolated ordering is derived from Dehornoy-like ordering. (Recall that all previously known examples of genuine isolated orderings are constructed by Dehornoy-like orderings.)

We remark that it is known that the group $G= G_{a_{1},\ldots,a_{m}}$ is a two-generator group if and only if $a_{i}$ and $a_{j}$ are not coprime for some $i \neq j$ \cite{mps}. Therefore for example, the isolated ordering of  $G_{2,3,4}$ in Example \ref{exam:ZZZ} (ii) is an isolated ordering which is not derived from a Dehornoy-like ordering.\\

{\em (2): The natural right $G$-action on $\LO(G)$ has at least $2(m-1)!$ distinct orbits derived from isolated orderings.}\\

There is a natural, continuous right $G$-action on $\LO(G)$, defined as follows: For a left ordering $<$ of $G$ and $g \in G$, we define the left ordering $< \! \cdot  g$ by $h\, (<\!\cdot g)\, h'$ if $hg < h'g$. This action sends an isolated ordering to an isolated ordering. Although this action is natural and important, little is known about the quotient $\LO(G) \slash G$. 

Recall that $G$ is written as the amalgamated free products of $m$ infinite cyclic groups $\Z^{(i)}$. As we have seen, the way of decomposition of $G$ ( the way of putting parenthesis) does not affect the obtained isolated ordering $<_{G}$.

On the other hand, for a permutation $\sigma \in S_{m}$, $G_{a_{1},\ldots,a_{m}} = G_{a_{\sigma(1)}, \ldots, a_{\sigma(m)}}$.
By viewing $G_{a_{1},\ldots,a_{m}} = G_{a_{\sigma(1)}, \ldots, a_{\sigma(m)}}$ and applying the construction above, we get an isolated ordering $<_{\sigma}$ whose minimal positive element is 
\[  x_{\sigma(1)} x_{\sigma(2)}^{1-a_{\sigma(2)}} \cdots x_{\sigma(m)}^{1-a_{\sigma(m)}} = x_{1}^{-(m-1)a_{1}} x_{\sigma(1)} x_{\sigma(2)}\cdots x_{\sigma(m)}. \]
Thus, for two permutations $\sigma$ and $\tau$, if 
$ x_{\sigma(1)} x_{\sigma(2)}\cdots x_{\sigma(m)}$ and $x_{\tau(1)} x_{\tau(2)}\cdots x_{\tau(m)}$ are not conjugate, then two isolated orderings $<_{\sigma}$ and $<_{\tau}$ belong to distinct $G$-orbits.
Hence, we are able to construct $(m-1)!$ distinct $G$-orbits of isolated orderings.

Recall that these orderings are constructed from the standard left orderings of $\Z^{(i)}$. By using the opposite of the standard left-ordering of $\Z$ instead, we get other $(m-1)!$ distinct $G$-orbits of isolated orderings in a similar way. Thus we have at least $2(m-1)!$ different $G$-orbits derived from isolated orderings. \\

{\em (3): The natural right $\textrm{Aut}(G)$-action on $\LO(G)$ has at least $(m-1)!$ distinct orbits derived from isolated orderings if all $a_{1},\ldots,a_{m}$ are distinct.}\\

As in the group $G$ itself, there is a natural right $\textrm{Aut}(G)$-action on $\LO(G)$. For a left ordering $<$ of $G$ and $\theta \in \textrm{Aut}(G)$, we define the left ordering $<\! \cdot \theta$ by $h < \!\!\cdot\theta\;  g $ if $h\theta < g\theta$.
The right $G$-action on $\LO(G)$ can be regarded as the restriction of the natural $\textrm{Aut}(G)$-action to the subgroup $\textrm{Inn}(G)$.
 
There is one symmetry which reduces the number of orbits: the involution defined by $x_{i} \mapsto x_{i}^{-1}$ $(i=1,\ldots,m)$. This amounts to taking the opposite ordering. If all $a_{1},\ldots,a_{m}$ are distinct, $\phi(a_{i}) \neq a_{j}^{\pm 1}$ for any $\phi \in \textrm{Aut}(G)$. Hence by a similar argument as (2), by looking at the minimal positive elements, we show that there are $(m-1)!$ distinct $\textrm{Aut}(G)$-orbit derived from isolated orderings. 
\\

Thus, the properties (2) and (3) show that the group $G$ has quite a lot of essentially different isolated orderings.

\subsection{Centerless group with isolated ordering}

Next we consider the construction of the case $z_{H}$ is non-central.
First of all, let $G_{m,n} = \langle b,c \: | \: b^{m} =c^{n} \rangle$. By Example \ref{exam:ZZZ} (i), $G_{m,n}$ has an isolated left ordering $<_{m,n}$ which is defined by $\{ bc^{1-n}, c\}$.

Let us consider a non-central element 
$bc= bc^{1-n} \cdot b^{m}$. Then it satisfies the inequality $b^{m} <_{m,n} bc <_{m,n} b^{2m}$. Since $b^{m}$ is $<_{m,n}$-cofinal central element, this shows that $bc$ is also $<_{m,n}$-positive cofinal.

$<_{m,n}$ is a $(bc^{1-n})$-right invariant ordering  by Lemma \ref{lem:minimal}, and $<_{m,n}$ is also a $b^{m}$-right invariant ordering  since $b^{m}$ is central. Thus, $<_{m,n}$ is a $(bc)$-right invariant ordering.

Thus, we can take the non-central element $bc$ as an element $z_{H}$ in Theorem \ref{thm:main} and we are able to apply the partially central cyclic amalgamation construction.
Now we consider the group $H =H_{p,q,m,n} = \Z*_{\Z} G_{m,n} =\Z *_{\Z} (\Z *_{\Z} \Z)$ defined by
\[  \langle a, b, c \: | \: b^{m} =c^{n}, a^{p} = (bc)^{q} \rangle. \]

This group has an isolated left ordering $<_{H}$, defined by $\{a(bc)^{1-q}, bc^{1-n}, c\}.$
Let us put $x=a(bc)^{1-q}$, $y=(bc)^{1-n}$, and $z=c$. Then the group $H_{p,q,m,n}$ is presented as
\[ H_{p,q,m,n} = \langle x,y,z \: | \: (yz^{n-1})^{m} = z^{n}, (x (yz^{n})^{q-1})^{p} = (yz^{n})^{q} \rangle\]
by using the generator $\{x,y,z\}$.

Clearly, $H$ has trivial center. This gives a first example of centerless group having isolated orderings. In fact, Theorem \ref{thm:main} will allow us to construct many examples of centerless group having isolated orderings.

\end{document}